\documentclass[11pt, a4article]{article}
\usepackage{graphicx}
\usepackage{amscd}
\usepackage{amsmath}
\usepackage{amsfonts}
\usepackage{amssymb}
\usepackage{amsthm}
\usepackage{setspace}
\usepackage{enumerate}
\usepackage{color}
\usepackage{url}
\usepackage{hyperref}
\usepackage{bm}
\usepackage{xy}
\usepackage{stmaryrd}
\usepackage{a4wide}
\usepackage{fancyhdr}
\usepackage{leftidx}

\theoremstyle{plain}

\newtheorem{theorem}{Theorem}[section]

\newtheorem{corollary}[theorem]{Corollary}

\newtheorem{lemma}[theorem]{Lemma}

\newtheorem{proposition}[theorem]{Proposition}

\newtheorem{definition}[theorem]{Definition}

\newtheorem{assumption}[theorem]{Assumption}
\theoremstyle{remark}
\newtheorem{remark}[theorem]{Remark}

\numberwithin{equation}{section}

\newcommand{\ind}{1\!\kern-1pt \mathrm{I}}


\usepackage{authblk}
\providecommand{\keywords}[1]{\textbf{{Keywords.}} #1}
\providecommand{\subjclass}[1]{\textbf{{MSC2010.}} #1}
\bigskip

\title{Reflected stochastic differential equations driven by $G$-Brownian motion in non-convex domains}
\author{Yiqing Lin\thanks{Email: yiqing.lin@polytechnique.edu}\\ Centre de math\'ematiques appliqu\'ees\\ \'Ecole Polytechnique\\ 91128 Palaiseau Cedex -  France
\and Abdoulaye Soumana Hima \\Institut de recherche math\'ematiques de Rennes
\\Universit\'e de Rennes 1\\
35042 Rennes Cedex - France \\and\\
D\'epartement de math\'ematiques \\
Universit\'e de Maradi\\
BP 465 Maradi - Niger}
\date{\today}
\begin{document}
\maketitle
\begin{abstract}
In this paper, we first review the penalization method for solving deterministic Skorokhod problems in non-convex domains and establish estimates for problems with $\alpha$-H\"older continuous functions. With the help of these results obtained previously for deterministic problems, we pathwisely define the reflected $G$-Brownian motion and prove its existence and uniqueness in a Banach space.  Finally, multi-dimensional reflected stochastic differential equations driven by $G$-Brownian motion are investigated via a fixed-point argument. 
\end{abstract}


\noindent \keywords{H\"older continuity, Skorokhod problem, $G$-Brownian motion, Stochastic differential equations, non-convex reflecting boundaries}\\

\noindent \subjclass{60H10}

\maketitle

\section{Introduction}
\bigskip
This paper considers  multidimensional reflected stochastic differential equations in a sublinear expectation space. These equations are driven by a new type of Brownian motion associated with a sublinear expectation, which are introduced by Peng \cite{Peng3, Peng5, Peng4} during the past decade. In the seminal works of Peng, he has established a framework of nonlinear It\^o's calculus and related stochastic analysis which does not rely on a single probability measure. This theory provides basic tools to discuss problems in finance with Knigntian uncertainty and in robust statistics. 
Moreover, the nonlinear Feynman-Kac formula obtained in \cite{Peng4, HJPS14b} provides an  probabilistic representation of fully nonlinear parabolic PDEs via forward-backward systems. \\
%

In \cite{Peng4}, the so-called $G$-Brownian motion is defined as a continuous  process with stationary independent increments. Under the associated $G$-expectation, these increments are subject to $G$-normal distribution with volatility uncertainty between two bounds. According to Denis et al. \cite{DHP}, the $G$-expectation can be regarded as an upper expectation based on a collection of non-dominated martingale measures $\mathcal{P}_G$. Furthermore, a Choquet capacity associated with such a collection  can be defined. This leads to a notion of equivalence between two random variables in the sublinear expectation space --- ``quasi-sure'' (q.s.). Instead of ``almost-surely'' in the classical probability theory, we say a property holds quasi-surely if it holds outside a null set for the referred capacity.  In the present paper, we shall examine the following equation in the quasi-sure sense, 
\begin{equation}
\left\{
                \begin{aligned}
      &     X_{t}=x_{0}+\int_{0}^{t}f\left(
s,X_{s}\right) ds+\int_{0}^{t}h\left( s,X_{s}\right) d\langle B, B\rangle_{s}+\int_{0}^{t}g\left( s,X_{s}\right) dB_{s}+K_{t},~~~0\leq t\leq T;\\
     &             K_{t} =\int_{0}^{t}\mathbf{n}_s d\left\vert K\right\vert _{s};~~~
\left\vert K\right\vert _{t}=\int_{0}^{t}\mathbf{1}%
_{\left\{ X_{s} \in \partial D\right\} }d\left\vert
K\right\vert _{s},~~~~\text{q.s.}
                \end{aligned}
              \right.\label{a}
\end{equation}
where $B$ is a $d$-dimensional $G$-Brownian motion; $\langle B, B\rangle$ is the covariance matrix of $B$; $X$ is a  process reflecting on the boundaries of domain $\overline{D}$ and 
$K$ is a bounded variations process with variation $|K|$ increasing only when $X\in \partial\overline{D}$. \\

In the classical framework, reflected stochastic differential equations driven by Brownian motion have been extensively studied by many authors. Among them, Skorokhod \cite{Sko1961, Sko1962} is the first who introduced diffusion processes with
reflecting boundaries in the 1960s. Later on, reflecting diffusions in a half-space have been investigated by Watanabe \cite{watanabe}, El Karoui \cite{Elk1975}, Yamada \cite{Yamada}, El Karoui and Chleyat-Maurel \cite{Elk1978}, El Karoui et al. \cite{Elk1980}, etc. The study of multi-dimensional stochastic differential equations on a general domain dates back to Stroock and Varadhan \cite{stroock}, in which the existence and uniqueness of weak solutions  have been proved when the domain is smooth. Afterwards, solutions of such equations has been built on a convex domain by a direct method in Tanaka \cite{Tanaka}, whereas  
Menaldi \cite{menaldi} and Lions et al. \cite{lions} have adopted a penalization method to construct them. Concerning the reflecting problem with a non-convex but ``admissible'' domain, Lions and Sznitman \cite{LionsetSznitman} have first solved the deterministic Skorokhod  problem and have applied this result to construct pathwisely an iteration sequence in order to approximate  the corresponding reflecting diffusion. The results of \cite{LionsetSznitman} has been later improved in  Saisho \cite{Saisho1} and in Saisho and
Tanaka \cite{SaishoetTanaka} by removing the admissibility condition on the domain. \\

In the context of sublinear expectation, we shall discuss the above mentioned equation (\ref{a}), in which a newly defined It\^o stochastic integral with respect to $G$-Brownian motion $\int g dB$ appears in the dynamic.  As its counterpart in the classical theory, adapted to Peng's method, this integral is first defined for simple processes and could subsequently be extended to $M^p_G$ due to the $G$-BDG type estimate, where $M^p_G$ is a normed space of processes with slightly additional regularity (see also \cite{hwz}). Thanks to this extension of the It\^o type integral, the notion stochastic differential equations driven by $G$-Brownian motion is brought to this nonlinear stochastic analysis framework. Under the Lipschitz condtions, forward equations are studied by Peng \cite{Peng4} and Gao \cite{Gao}; backward equation are examined in \cite{HJPS14a}. Moreover, scalar $G$-diffusion processes with reflection has been considered by Lin in \cite{Lin13} and the multidimensional problem are solved in Lin \cite{LIN} by the penalization method similar to \cite{menaldi}.\\

The main objective of this paper is to generalize the results of \cite{Lin13, LIN} to the multidimensional case when the reflecting boundary is not necessarily convex. We adopt the same assumptions as \cite{Saisho1, SaishoetTanaka} on the domain and, as the first step, we restrict ourself to the deterministic Skorokhod problems concerning $\alpha$-H\"older continuous paths. Precisely, we revise the estimate for the penalization sequence obtained in \cite{SaishoetTanaka} by introducing the $\alpha$-H\"older coefficient. Since the $G$-Brownian motion is supported on $\mathcal{C}^{0, \alpha}$ ($\alpha<1/2$), this estimates are accordingly applied to prove that the multidimensional $G$-Brownian motion in this non-convex domain can be approximated in the Banach space $M^p_G$ by a sequence of solutions of Lipschitz equations studied in \cite{Gao}.  Similar arguments also apply for reflected $G$-diffusion with bounded generators. Finally, we pathwisely construct an iteration sequence by using the deterministic result and conduct a fixed-point argument as \cite{LionsetSznitman} to prove the wellposedness of (\ref{a}) under the bounded and Lipschitz assumption on coefficients. We remark that the equation (\ref{a}) under consideration can also be examined in a weaker ``quasi-sure'' sense, which means it holds $\mathbb{P}$-almost surely for all $\mathbb{P}\in \mathcal{P}_G$. This notion is adopted by Soner et al. for establishing a similar nonlinear stochastic analysis framework -- second order backward stochastic differential equations (cf. \cite{soner}). We could proceed almost the same procedures of the present paper under each $\mathbb{P}\in \mathcal{P}_G$ and obtain a solution $(X^\mathbb{P}, K^\mathbb{P})$ which could be later aggregated to $(X, K)$ by Nutz \cite{nutz}. \\

The paper is organized as follows. In Section 2, we introduce
preliminaries in the framework of $G$-expectation which are necessary for the remainder of this paper. In addition, we revisit the deterministic Skorohod problems in a non-convex domain. In Section 3, we present our main results and Section 4 is devoted to prove the main results.
\bigskip

\section{Preliminaires}
In this section, we shall briefly introduce the $G$-expectation framework. Moreover, we shall discuss the deterministic Skorohod problem in non-convex domains and the sufficient conditions for its solvability.
\subsection{$G$-Brownian motion and $G$-expectation}
Adapting to Peng's framework, we first recall useful notations and results on the $G$-expectation and related $G$-It\^o type stochastic calculus.
 The reader interested
in the more detailed description on this topic is referred to
Denis et al. \cite{DHP}, Gao \cite{Gao}, Li and Peng \cite{LIetPeng} and Peng \cite{Peng4}.\\

Let $\Omega$ be the space of all $\mathbb{R}^{d}$-valued
continuous paths with $\omega_0 =0$, noted by $\mathcal{C}_0([0, \infty); \mathbb{R}^d),$
equipped
with the distance
$$
\rho(\omega^1, \omega^2):=\sum^\infty_{N=1} 2^{-N} ((\max_
{t\in[0,N]} | \omega^1_t-\omega^2_t|)  \wedge 1),
$$
and $(B_t)_{t\geq 0}$ be the canonical process, i.e., $B_t(\omega):=\omega_t$. For each $t\in \lbrack0,\infty)$, we list the following notations:

\begin{itemize}
\item[$\bullet$] $\Omega_{t}:=\{ \omega_{\cdot \wedge t}:\omega \in \Omega \}$; $\mathcal{F}_{t}:=\mathcal{B}(\Omega_{t})$;
\item[$\bullet$] $L^{0}(\Omega)$: the space of all $\mathcal{B}(\Omega)$-measurable real functions;
\item[$\bullet$] $L_{ip}(\Omega_t):=\{\varphi(B_{t_1}, \cdots, B_{t_n}):n\geq
1,\ 0\leq t_1< \cdots< t_n\leq t,\ \varphi \in C_{b,lip}
(\mathbb{R}^{d\times n})\}$;
\item[$\bullet$] $L_{ip}(\Omega):=\{\varphi(B_{t_1}, \cdots, B_{t_n}):n\geq
1,\ 0\leq t_1< \cdots< t_n\leq \infty,\ \varphi \in C_{b,lip}
(\mathbb{R}^{d\times n})\}$;
\end{itemize}
where $C_{b,lip}
(\mathbb{R}^{d\times n})$ is the collection of all bounded Lipschitz functions on $\mathbb{R}^{d\times n}$.\\

We fix a sublinear continuous and monotone function $G$: $\mathbb{S}^d\rightarrow \mathbb{R}$. For some bounded and closed subset $\Gamma\subset \mathbb{R}^d$, this function can be represented by
$$
G(A_0)=\sup_{Q\in \Gamma}\left\{\frac{1}{2}{\rm tr}\left[A_0Q\ltrans{Q}\right]\right\}, \ \ \ \ {\rm for}\ A_0\in \mathbb{S}^d.
$$
The related $G$-expectation on $(\Omega, L_{ip}(\Omega))$ can be constructed in the following way: for each $\xi\in L_{ip}(\Omega)$ of the form
$$\xi=\varphi(B_{t_1},B_{t_2}-B_{t_1},\cdots,B_{t_n}-B_{t_{n-1}}), \ \ 0\leq t_1<t_2<\cdots<t_n,$$
define
$$\mathbb{E}_G[\xi]:=u_1(0,0),$$
where $u_1(0,0)\in\mathbb{R}$ is obtained by recurrence: for $k=n,\cdots,1$, define $u_k:=u_k(t,x;x_1,\cdots,x_{k-1})$, which is a function in $(t,x)$ parameterized by $(x_1,\cdots,x_{k-1})\in\mathbb{R}^{d\times(k-1)}$, by the solution of the following $G$-heat equation defined on $[t_{k-1},t_k)\times\mathbb{R}^d$:
 $$\frac{\partial u_k}{\partial t}-G(D^2 u_k)=0,$$
 with the terminal condition
 $$u_k(t_k,x;x_1,\cdots,x_{k-1})=u_{{k+1}}(t_{k},x;x_1,\cdots,x_{k-1},x).$$
In particular, $u_n(t_n,x;x_1,\cdots,x_{n-1}):=\varphi(x_1,\cdots,x_{n-1},x)$. We say that the canonical process $(B_t)_{t\geq 0}$ is a $G$-Brownian motion under this sublinear expectation $\mathbb{E}_G[\cdot]$, which is with stationary, independent and $G$-Gaussian distributed increments  (see Definition 1.4 and 1.8 in Chap. I of \cite{Peng4} for the definition of $G$-Gaussian distribution and see Definition 3.10 in  Chap. I of \cite{Peng4} for the definition of independence under sublinear expectation).\\


We denote by $L_G^p(\Omega)$ (resp. $L_G^p(\Omega_T)$ the completion of $Lip(\Omega)$ (resp. $Lip(\Omega_T)$) with respect to the norm $||\cdot||_p:=\mathbb{E}_G[|\cdot|^p]^{\frac{1}{p}}$, for $p\geq 1$.
We can extend the domain of $G$-expectation $\mathbb{E}_G[\cdot]$ from $L_{ip}(\Omega)$ to $L^{0}(\Omega)$ by the procedure introduced in \cite{DHP},
i.e., constructing an upper expectation $\mathbb{E}[\cdot]$:
\begin{equation*}
\mathbb{E}[X]:=\sup_{\mathbb{P}\in\mathcal{P}_G}{\mathbf E}^\mathbb{P}[X],\ X\in L^0(\Omega),
\end{equation*}
where $\mathcal{P}_G$ is a weakly compact family of martingale measures on $(\Omega,\mathcal{B}(\Omega))$. This upper expectation
coincides with the $G$-expectation $\mathbb{E}_G[\cdot]$ on $L_{ip}(\Omega)$ and thus, on its completion $L^1_G(\Omega)$. Naturally, the Choquet capacity related to the upper expectation can be defined by
$$
\bar{C}(A):=\sup_{\mathbb{P}\in\mathcal{P}_G}\mathbb{P}(A),\ A\in\mathcal{B}(\Omega),
$$
and the notation of ``quasi-surely'' (q.s.) can be introduced as follows:
\begin{definition}[Quasi-sure]
A set $A\in\mathcal{B}(\Omega)$ is called polar if $\bar{C}(A)=0.$ A property
is said to hold quasi-surely if it holds outside a polar
set.
\end{definition}

The following Markov's inequality holds in the context of the upper expectation and the related Choquet capacity (Lemma 13 in \cite{DHP}).
\begin{lemma}[Markov's inequality]\label{MI}
Let $X\in L^0(\Omega)$ satisfying $\mathbb{E}[|X|^p]<\infty$, for $p>0$. Then, for each $a>0$,
$$
\bar{C}(\{|X|>a\})\leq \frac{\mathbb{E}[|X|^p]}{a^p}.
$$
\end{lemma}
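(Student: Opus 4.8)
The plan is to derive the inequality directly from the defining representations of the upper expectation $\mathbb{E}[\cdot]$ and of the capacity $\bar{C}(\cdot)$ as suprema over the family $\mathcal{P}_G$, reducing the whole statement to the classical Markov inequality applied measurewise. First I would record the elementary pointwise bound
$$
\ind_{\{|X|>a\}}\leq \frac{|X|^p}{a^p},
$$
valid for every $\omega\in\Omega$ and every $a>0$: on the event $\{|X|>a\}$ the right-hand side exceeds $1$ while the indicator equals $1$, and off this event the left-hand side vanishes while the right-hand side stays nonnegative.

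The second step is the identification $\bar{C}(A)=\mathbb{E}[\ind_A]$ for any $A\in\mathcal{B}(\Omega)$. This follows because $\mathbf{E}^{\mathbb{P}}[\ind_A]=\mathbb{P}(A)$ for each $\mathbb{P}\in\mathcal{P}_G$, so taking the supremum over $\mathcal{P}_G$ gives $\mathbb{E}[\ind_A]=\sup_{\mathbb{P}\in\mathcal{P}_G}\mathbb{P}(A)=\bar{C}(A)$ by the very definitions of $\mathbb{E}[\cdot]$ and $\bar{C}(\cdot)$. Applying this identity to $A=\{|X|>a\}$ and then invoking the monotonicity and positive homogeneity of $\mathbb{E}[\cdot]$ — both inherited at once from the linearity of each $\mathbf{E}^{\mathbb{P}}$ and preserved under taking suprema — I would chain the pointwise bound into
$$
\bar{C}(\{|X|>a\})=\mathbb{E}\bigl[\ind_{\{|X|>a\}}\bigr]\leq \mathbb{E}\!\left[\frac{|X|^p}{a^p}\right]=\frac{1}{a^p}\,\mathbb{E}[|X|^p],
$$
which is exactly the claimed estimate; the hypothesis $\mathbb{E}[|X|^p]<\infty$ merely ensures the right-hand side is finite and hence the bound is nonvacuous.

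As an equivalent route avoiding the indicator identification, I would fix $\mathbb{P}\in\mathcal{P}_G$, apply the classical Markov inequality to obtain $\mathbb{P}(\{|X|>a\})\leq a^{-p}\mathbf{E}^{\mathbb{P}}[|X|^p]\leq a^{-p}\mathbb{E}[|X|^p]$, and then take the supremum over $\mathbb{P}\in\mathcal{P}_G$ on the left-hand side. There is no genuine obstacle here: all quantities are nonnegative and the manipulations are monotone. The only point deserving a word of care — which I regard as the crux of the argument — is the legitimacy of commuting the supremum defining $\bar{C}$ with the monotonicity step; this is immediate because $a^{-p}\mathbb{E}[|X|^p]$ is a single constant dominating $\mathbb{P}(\{|X|>a\})$ uniformly in $\mathbb{P}$, so passing to the supremum on the left preserves the inequality.
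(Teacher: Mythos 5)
Your proof is correct. The paper does not prove this lemma at all --- it simply recalls it as Lemma 13 of Denis, Hu and Peng \cite{DHP} --- so there is no internal argument to compare against; your derivation (the pointwise bound $\mathbf{1}_{\{|X|>a\}}\leq |X|^p/a^p$ combined with the identity $\bar{C}(A)=\mathbb{E}[\mathbf{1}_A]$ and the monotonicity and positive homogeneity of the upper expectation, or equivalently the measurewise classical Markov inequality followed by a supremum over $\mathcal{P}_G$) is the standard and complete argument that the cited reference relies on.
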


We also have a generalized Fatou's lemma (cf. e.g. Lemma 2.11 in Bai and Lin \cite{Bai-Lin}) in the $G$-framework.
\begin{lemma}[Fatou's lemma]\label{fatulem}
Assume that $\{X^n\}_{n\in\mathbb{N}}$ is a sequence in $L^0(\Omega)$ and that for a $Y\in L^0(\Omega)$ satisfying $\mathbb{E}[|Y|]<\infty$ and for all $n\in\mathbb{N}$, $X^n \geq Y$, q.s., then
$$
\mathbb{E}[\liminf_{n\rightarrow\infty}X^n]\leq\liminf_{n\rightarrow\infty}\mathbb{E}[X^n].\\
$$
\end{lemma}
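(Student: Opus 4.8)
The plan is to reduce the statement to the classical Fatou lemma applied separately under each measure $\mathbb{P}\in\mathcal{P}_G$, and then to recover the upper-expectation inequality by taking a supremum. First I would fix an arbitrary $\mathbb{P}\in\mathcal{P}_G$. Since every polar set is $\mathbb{P}$-null, the quasi-sure bound $X^n\geq Y$ holds $\mathbb{P}$-almost surely for every $n$, and the hypothesis $\mathbb{E}[|Y|]<\infty$ gives $\mathbf{E}^{\mathbb{P}}[|Y|]\leq\mathbb{E}[|Y|]<\infty$, so that $Y$ is $\mathbb{P}$-integrable. In particular the negative part of each $X^n$ is dominated by the integrable variable $|Y|$ (as $X^n\geq Y\geq-|Y|$), so all the expectations appearing below are well defined in $(-\infty,+\infty]$; the same bound $\liminf_n X^n\geq Y$ q.s. guarantees that the left-hand side $\mathbb{E}[\liminf_n X^n]$ of the claim is itself well defined.

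Next I would apply the classical Fatou lemma to the nonnegative sequence $X^n-Y\geq 0$, which holds $\mathbb{P}$-a.s., obtaining
$$
\mathbf{E}^{\mathbb{P}}\Big[\liminf_{n\to\infty}(X^n-Y)\Big]\leq\liminf_{n\to\infty}\mathbf{E}^{\mathbb{P}}[X^n-Y].
$$
Because $Y$ is a fixed $\mathbb{P}$-integrable variable, I may cancel the finite term $\mathbf{E}^{\mathbb{P}}[Y]$ on both sides and use $\liminf_n(X^n-Y)=\liminf_n X^n-Y$ to deduce the Fatou inequality under $\mathbb{P}$, namely $\mathbf{E}^{\mathbb{P}}[\liminf_n X^n]\leq\liminf_n\mathbf{E}^{\mathbb{P}}[X^n]$. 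Here the random variable $\liminf_n X^n$ is the pointwise lower limit of $\mathcal{B}(\Omega)$-measurable functions, hence lies in $L^0(\Omega)$, so the left-hand side is meaningful.

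Then I would bound each term on the right by the upper expectation: $\mathbf{E}^{\mathbb{P}}[X^n]\leq\sup_{\mathbb{Q}\in\mathcal{P}_G}\mathbf{E}^{\mathbb{Q}}[X^n]=\mathbb{E}[X^n]$, whence $\liminf_n\mathbf{E}^{\mathbb{P}}[X^n]\leq\liminf_n\mathbb{E}[X^n]$. Combining with the previous display gives, for every fixed $\mathbb{P}\in\mathcal{P}_G$,
$$
\mathbf{E}^{\mathbb{P}}\Big[\liminf_{n\to\infty}X^n\Big]\leq\liminf_{n\to\infty}\mathbb{E}[X^n].
$$
Since the right-hand side does not depend on $\mathbb{P}$, taking the supremum over $\mathbb{P}\in\mathcal{P}_G$ on the left and invoking the definition $\mathbb{E}[\,\cdot\,]=\sup_{\mathbb{P}\in\mathcal{P}_G}\mathbf{E}^{\mathbb{P}}[\,\cdot\,]$ yields the claimed inequality.

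The argument itself is short; the only points requiring care—and hence the main obstacle—are the measure-theoretic bookkeeping: verifying that the quasi-sure lower bound transfers to a $\mathbb{P}$-a.s. bound for each $\mathbb{P}$, checking that $Y$ is $\mathbb{P}$-integrable so that forming $X^n-Y$ and cancelling $\mathbf{E}^{\mathbb{P}}[Y]$ are legitimate, and confirming that all quantities stay well defined when some expectations equal $+\infty$. Notably, no compactness or weak-convergence property of $\mathcal{P}_G$ is needed—only that it is the family defining the upper expectation and that its members annihilate polar sets.
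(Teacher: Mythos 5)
Your proof is correct: reducing to the classical Fatou lemma for the nonnegative sequence $X^n-Y$ under each fixed $\mathbb{P}\in\mathcal{P}_G$ (using that polar sets are $\mathbb{P}$-null and that $\mathbf{E}^{\mathbb{P}}[|Y|]\leq\mathbb{E}[|Y|]<\infty$), then bounding $\mathbf{E}^{\mathbb{P}}[X^n]$ by $\mathbb{E}[X^n]$ and taking the supremum over $\mathbb{P}$, is exactly the standard argument. The paper itself gives no proof of this lemma, only a citation to Lemma 2.11 of Bai and Lin, and your argument is essentially the one found there, with the measure-theoretic well-definedness points you flag handled correctly.
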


In \cite{Peng4}, Peng introduce the It\^o type stochastic integral with respect to the $G$-Brownian motion by first considering the simple process space:
\begin{align*}
M^0([0,T]; \mathbb{R})  &  =\{ \eta:\eta_{t}(\omega)=\sum_{i=0}^{n-1}\xi_{i}%
(\omega)\mathbf{1}_{[t_{i},t_{i+1})}(t),\\
&\  {\rm where}\ n\in \mathbb{N}^*,\ 0=t_{0}<\cdots<t_{n}=T,\ \xi_{i}\in Lip%
(\Omega_{t_{i}}),\ i=0,\cdots,n-1\}.
\end{align*}
\begin{definition}
For $p\geq 1$, we denote by $M_G^p([0,T];\mathbb{R})$ the completion of $M^0([0,T]; \mathbb{R})$ under the following norm:
\begin{equation*}
\|\eta\|_p:=\bigg(\mathbb{E}\bigg[\frac{1}{T}\int_0^T|\eta_t|^p dt\bigg]\bigg)^{1/p}.
\end{equation*}
\end{definition}

Here below is the definition of the $G$-It\^o type sintegral. In the sequel, $B^{\bf a}$ denotes the inner product of ${\bf a}\in \mathbb{R}^d$ and $B$, which is still a $G$-Brownian motion, and $\sigma_{\bf a\ltrans{a}}:=\mathbb{E}[({\bf a}, B_1)^2]$.

\begin{definition}\label{hao}
For each $\eta\in M^0([0, T]; \mathbb{R})$, we define the It\^o type integral
$$
\mathcal{I}_{[0, T]}(\eta)=\int^T_0\eta_tdB^{\bf a}_t:=\sum^{N-1}_{k=0}\xi_k(B^{\bf a}_{t_{k+1}}-B^{\bf a}_{t_k}).
$$
Then, thanks to $G$-It\^o's inequality (cf. Lemma 3.4 in \cite{LIetPeng}), this linear mapping $\mathcal{I}_{[0, T]}$ on $M^0([0, T]; \mathbb{R})$ can be continuously extended to $\mathcal{I}_{[0, T]}: M^2_G([0, T];\mathbb{R})\rightarrow L^2_G(\Omega_T)$ and for each $\eta \in M^2_G([0, T]; \mathbb{R})$, we define $\int^T_0\eta_tdB^{\bf a}_t:=\mathcal{I}_{[0, T]}(\eta)$.
\end{definition}
Moreover, we have the following BDG type inequality (cf. Theorem 2.1 in \cite{Gao}). Define
$$\sigma_{\mathbf{a}\ltrans{\mathbf{a}}}:=\sup_{Q\in\Gamma}{\rm tr}(Q \ltrans{Q}\mathbf{a}\ltrans{\mathbf{a}}).$$
\begin{lemma}\label{th 2.1.1}Let $p\geq2$, $\mathbf{a}\in\mathbb{R}^d$, $\eta\in M_G^p([0,T]; \mathbb{R})$ and $0\leq s\leq t\leq T$. Then,
$$
\bar{\mathbb{E}}\bigg[\sup_{s\leq u\leq
t}\bigg|\int^u_s\eta_rdB^{\mathbf{a}}_r\bigg|^p\bigg]\leq C_p\sigma^{p/2}_{\mathbf{a}\ltrans{\mathbf{a}}} \mathbb{E}\left[\left(\int^t_s |\eta_u|^2du\right)^{p/2}\right],
$$
where $C_p>0$ is a constant independent of $\mathbf{a}, \
\eta$ and $\Gamma.$
\end{lemma}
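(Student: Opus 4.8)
The plan is to prove the estimate first for simple integrands $\eta\in M^0([0,T];\mathbb{R})$ and then to pass to general $\eta\in M^p_G$ by density, since both sides are continuous functionals of $\eta$ for the $M^p_G$-norm: the left-hand side through the continuity of $\mathcal{I}_{[0,T]}$ recorded in Definition \ref{hao}, the right-hand side being manifestly norm-continuous. Thus it suffices to treat simple $\eta$, write $M_u:=\int_s^u\eta_r\,dB^{\mathbf{a}}_r$ for $s\le u\le t$, and bound $\bar{\mathbb{E}}[\sup_{s\le u\le t}|M_u|^p]$.

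My preferred route exploits the representation $\bar{\mathbb{E}}[\,\cdot\,]=\sup_{\mathbb{P}\in\mathcal{P}_G}\mathbf{E}^{\mathbb{P}}[\,\cdot\,]$ recalled above, which reduces the sublinear estimate to a family of classical ones. The key identification is that, for each fixed $\mathbb{P}\in\mathcal{P}_G$, the $G$-integral $M$ coincides $\mathbb{P}$-a.s. with the classical It\^o integral of $\eta$ against the continuous $\mathbb{P}$-martingale $B^{\mathbf{a}}$; for a simple process this is immediate from Definition \ref{hao}, the two discrete sums being literally equal. The second ingredient is the structural bound on the quadratic variation: every $\mathbb{P}\in\mathcal{P}_G$ is a martingale measure under which the density of $\langle B\rangle$ is valued in the set $\{Q\ltrans{Q}:Q\in\Gamma\}$ generating $G$, so that $d\langle B^{\mathbf{a}}\rangle_r\le\sigma_{\mathbf{a}\ltrans{\mathbf{a}}}\,dr$, $\mathbb{P}$-a.s. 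Consequently $\langle M\rangle_t=\int_s^t\eta_r^2\,d\langle B^{\mathbf{a}}\rangle_r\le\sigma_{\mathbf{a}\ltrans{\mathbf{a}}}\int_s^t\eta_r^2\,dr$, $\mathbb{P}$-a.s.

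With these in hand I would apply, under each fixed $\mathbb{P}$, the classical Burkholder--Davis--Gundy inequality (whose constant $C_p$ depends only on $p$, not on $\mathbb{P}$, $\mathbf{a}$ or $\Gamma$), followed by the above quadratic-variation bound, to obtain
$$
\mathbf{E}^{\mathbb{P}}\Big[\sup_{s\le u\le t}|M_u|^p\Big]\le C_p\,\mathbf{E}^{\mathbb{P}}\big[\langle M\rangle_t^{p/2}\big]\le C_p\,\sigma^{p/2}_{\mathbf{a}\ltrans{\mathbf{a}}}\,\mathbf{E}^{\mathbb{P}}\Big[\Big(\int_s^t\eta_r^2\,dr\Big)^{p/2}\Big].
$$
Taking the supremum over $\mathbb{P}\in\mathcal{P}_G$ on both sides, the uniformity of the constant turns the left-hand side into $\bar{\mathbb{E}}[\sup_{s\le u\le t}|M_u|^p]$ and the right-hand side into $C_p\,\sigma^{p/2}_{\mathbf{a}\ltrans{\mathbf{a}}}\,\mathbb{E}[(\int_s^t\eta_r^2\,dr)^{p/2}]$, which is the claim. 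Finally I would remove the restriction to simple $\eta$ via the density argument, taking $\eta^n\to\eta$ in $M^p_G$, passing to the limit on both sides and, where the a.s. identification is needed for general $\eta$, extracting subsequences along which the approximating integrals converge quasi-surely.

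The main obstacle is the rigorous identification of the abstractly-defined $G$-integral with the classical It\^o integral under each $\mathbb{P}\in\mathcal{P}_G$ for non-simple $\eta$, together with the uniform control of $\langle B^{\mathbf{a}}\rangle$; this is exactly where the description of $\mathcal{P}_G$ as a weakly compact family of martingale measures with volatility constrained by $\Gamma$ is essential. An entirely intrinsic alternative, avoiding the measure representation, is to apply $G$-It\^o's formula to $x\mapsto|x|^p$ along $M$, use that the $G$-expectation of the resulting $dB^{\mathbf{a}}$-integral vanishes and that $\langle B^{\mathbf{a}}\rangle_t-\langle B^{\mathbf{a}}\rangle_s\le\sigma_{\mathbf{a}\ltrans{\mathbf{a}}}(t-s)$ quasi-surely, and then close the estimate with a Doob-type maximal inequality and Young's inequality as in the classical proof; there the delicate point is carrying out the absorption step under a merely subadditive expectation.
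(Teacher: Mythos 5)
This lemma is not proved in the paper at all: it is quoted verbatim as Theorem 2.1 of \cite{Gao}, so there is no in-paper argument to compare yours against. That said, your proposal is correct and is essentially the standard proof of this estimate (and the one underlying the cited source): reduce to simple integrands, identify the $G$-integral under each $\mathbb{P}\in\mathcal{P}_G$ with the classical It\^o integral against the $\mathbb{P}$-martingale $B^{\mathbf{a}}$, apply the classical Burkholder--Davis--Gundy inequality whose constant depends only on $p$, control the bracket via $d\langle B^{\mathbf{a}}\rangle_r\leq\sigma_{\mathbf{a}\ltrans{\mathbf{a}}}\,dr$ (the pathwise bound recorded in (\ref{pquar})), and take the supremum over $\mathbb{P}$. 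One point of order worth fixing: the continuity of the left-hand side in the $M^p_G$-norm for $p>2$ is not ``recorded in Definition \ref{hao}'' (which only gives the extension $M^2_G\to L^2_G$, without the supremum); that continuity is itself a consequence of the inequality applied to differences of simple processes, combined with H\"older's inequality in the form $\bigl(\int_s^t|\eta_u|^2\,du\bigr)^{p/2}\leq (t-s)^{p/2-1}\int_s^t|\eta_u|^p\,du$ to dominate the right-hand side by the $M^p_G$-norm. So the correct logical order is: prove the estimate on $M^0$, use it to extend $\mathcal{I}_{[0,T]}$ continuously on $M^p_G$ with the maximal functional on the left, and then pass to the limit --- which is what your argument effectively does once rearranged.
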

In the $G$-expectation framework the quadratic variation process $\langle B^{\bf a}\rangle$ is no longer deterministic, which is
formulated by
$$
\langle B^\mathbf{a}\rangle_t:=\lim_{\mu(\pi^N_{[0, T]})\rightarrow 0}\sum^{N-1}_{k=0}(B^\mathbf{a}_{t^N_{k+1}}-B^\mathbf{a}_{t^N_k})^2=(B^\mathbf{a}_t)^2-2\int^t_0B^\mathbf{a}_sdB^\mathbf{a}_s,
$$
where $\pi^N_{[0, T]}$ is a partition of $[0,T]$, i.e., $\pi^N_{[0, T]}=\{t_0, t_1,\ldots,t_N\}$ such that
$0=t_0<t_1<\ldots<t_N=T$, and $\mu(\pi^N_{[0, T]}):=\max_{1\leq i\leq N}|t^N_i-t^N_{i-1}|$.
 For two given vectors $\mathbf{a}$, $\bar{\mathbf{a}}\in\mathbb{R}^d$, the mutual variation process of $B^\mathbf{a}$ and
$B^{\bar{\mathbf{a}}}$ is defined by $$\langle
B^\mathbf{a},B^{\bar{\mathbf{a}}} \rangle_t:=\frac{1}{4}(\langle
B^{\mathbf{a}+\bar{\mathbf{a}}}\rangle_t-\langle
B^{\mathbf{a}-\bar{\mathbf{a}}}\rangle_t).$$
By Corollary 5.7 in Chapter III of Peng \cite{Peng4},
for each $0\leq s\leq t\leq T,$
\begin{equation}\label{pquar}
\langle B^\mathbf{a}\rangle_t-\langle B^\mathbf{a}\rangle_s\leq \sigma_{\mathbf{a}\ltrans{\mathbf{a}}}(t-s).
\end{equation}
Let $B^i$ denote the $i$th coordinate of the $G$-Brownian motion $B$ and set $\langle B, B\rangle_t=(\langle B^i, B^j\rangle_t)_{i, j= 1, \ldots, d}$. Thus, the path of $\langle B, B\rangle$ quasi-surely has a bounded density and indeed the stochastic integral for $\eta\in M^1_G([0, T]; \mathbb{R})$  with respect to $\langle B^{\bf a}, B^{\bar{\bf a}}\rangle$ could be defined pathwisely.\\

Finally, we recall that  Gao \cite{Gao} proves the $G$-It\^{o} type integral $X_\cdot=\int_0^\cdot\eta_sdB^{\bf a}_s$ has a continuous $\bar{C}$-modification, for any $\eta\in M_G^2([0,T]; \mathbb{R})$.

\subsection{Conditions on the domain}
In order to investigate the reflected $G$-Brownian motion in this paper, we shall first recall the results in \cite{SaishoetTanaka}  for the deterministic Skorohod problem in a domain $D\subset\mathbb{R}^d$, $d\in \mathbb{N}^*$. In that paper,  the following conditions are assumed:\\

\textbf{CONDITION (A).} For $x\in \partial D$, we denote
$$\mathcal{N}_{x,r}=\{\mathbf{n}\in \mathbb{R}^d: |\mathbf{n}|=1, B(x-r\mathbf{n},r)\cap
D=\emptyset\},\ r>0\quad \mbox{and}\quad \mathcal{N}_{x}=%
\bigcup_{r>0}\mathcal{N}_{x,r},$$
where $B(z, r):=\{y\in \mathbb{R}^d: |y-z|<r|\}$, for $z\in \mathbb{R}^d$.
We assume that there exists a constant $r_{0}>0$ such that $\mathcal{N}_{x}=%
\mathcal{N}_{x,r_{0}}\neq \emptyset$, for all $x\in \partial D$.\\

\textbf{CONDITION (B).} Assume that there exist constants $\delta >0$ and $%
\beta \in \lbrack 1,\infty )$ such that for any $x\in \partial
D$, we can find a unit vector  $l_{x}$ such that
\begin{equation*}
\langle l_{x},\mathbf{n}\rangle \geq \frac{1}{\beta },\text{ \ \ for
all \ \ }\mathbf{n\in }\bigcup_{y\in B(x,\delta )\cap \partial D}\mathcal{N}%
_{y},
\end{equation*}%
where $\langle \cdot, \cdot\rangle$ denotes the usual inner product in $\mathbb{R}^d$.\\

Throughout this paper, we consider a domain
$D\subset \mathbb{R}^{d}$ satisfying both Condition (A) and (B). For each
$x\in \mathbb{R}^{d}$ such that $\mbox{dist}(x,\overline{D})<r_{0}$, there exists a unique
$\overline{x}\in \overline{D}$
with
$\left\vert x-\overline{x}\right\vert =\mbox{dist}(x,\overline{D})$. If $x\notin \overline{D}$
we have $\overline{x}\in \partial D$ and $\frac{\overline{x}-x}{\left\vert \overline{x%
}-x\right\vert }\in \mathcal{N}_{\overline{x}}$  (see e.g. Remark 1.3 in \cite{Saisho1}). We keep this notation $\overline{x}$ for the projection of $x$ on $D$ in the remainder of this paper.\\

To consider the solvability of reflected multi-dimensional Skorohod stochastic differential equations on the domain $D$, we assume furthermore \\

\textbf{CONDITION (C).} There exists a bounded function
$\Psi \in \mathcal{C}_{b}^{2}\left( \mathbb{R}%
^{d}\right) $ whose first and second derivatives are also bounded, and there exists a $\delta'>0$, such that
\begin{equation*}
\forall x\in \partial D,~\forall y\in \overline{D},~\forall \mathbf{n\in }%
\mathcal{N}_{x},~\left\langle y-x,\mathbf{n}\right\rangle +\frac{1}{\delta'}%
\left\langle \nabla \Psi \left( x\right) ,\mathbf{n}\right\rangle \left\vert
y-x\right\vert ^{2}\geq 0.
\end{equation*}
We note the bound of $\Psi$ and its derivatives by $L_\Psi$. This condition is critical for proving the a priori estimate (\ref{est10}), which is similar to its analogue in \S 3 of \cite{LionsetSznitman}.

\subsection{Deterministic Skorohod problem}
We recall here the solvability result of deterministic Skorohod problems in the domain $D$ satisfying Conditions $(A)$ and (B), which could be found  in \cite{Saisho1}. This result is our starting point of this paper. \\

Assume that $\phi$ is a continuous function taking values in $\mathbb{R}^{d}$ and that $\phi$ is of bounded variation over each finite interval. We denote by
$|\phi |_{t}$ the total variation of $\phi$ over $[0, t]$, i.e.,
\begin{equation*}
\left\vert \phi \right\vert _{t}:=\sup_{0=t_{1}<t_{2}<\cdots
<t_{n}=t, n\in \mathbb{N}} \sum_{k=1}^{n}|\phi _{t_{k}}-\phi
_{t_{k-1}}|.
\end{equation*}%
We also note
$\left\vert\phi \right\vert _{t}^{s}:=|\phi |_{t}-|\phi |_{s},$ $0\leq s\leq t$.\\

For a continuous function $w$ defined on $[0, T]$, $T>0$,  taking values in $\mathbb{R}^{d}$
with
$w(0)=0$ and for $x_0\in \overline{D}$,
we consider the Skorohod equation below:
\begin{equation}
\xi _t=x_0+w_t+\phi_t,~~~t\in \left[ 0,T\right] .  \label{1.3}
\end{equation}%
\begin{definition}
We call a couple of functions
$\left( \xi ,\phi
\right)$
solution of (\ref{1.3}), if it satisfies (\ref{1.3}) and the following conditions:
\begin{enumerate}[(i)]
\item The function $\xi $ is continuons and takes values in $\overline{D}$;
\item The function $\phi $ is continuous and takes values in $\mathbb{R}^{d}$ with $\phi (0)=0$. Moreover, it is of bounded variation over $[0, T]$ and for all $t\in \left[ 0,T\right]$,
\begin{eqnarray*}
\phi _t &=&\int_{0}^{t}\mathbf{n}_sd\left\vert \phi \right\vert _{s}; \\
|\phi |_{t} &=&\int_{0}^{t}{\bf 1}_{\{\xi _s\in\partial D\}}d\left\vert \phi
\right\vert _{s},
\end{eqnarray*}%
where $\mathbf{n}_s\in \mathcal{N}_{\xi _s}$, if $\xi _s\in \partial D$.
\end{enumerate}
\end{definition}
\begin{theorem}\label{det1}
(Theorem 4.1 in \cite{Saisho1}) Suppose that the domain $D\subset \mathbb{R}^d$ is open and satisfies Conditions (A) and (B). Then there exists a unique solution $\left( \xi ,\phi \right) $ for the deterministic Skorohod problem  (\ref{1.3}).
\end{theorem}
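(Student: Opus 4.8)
\textbf{Proof proposal for Theorem~\ref{det1}.}
The statement to be proved is the existence and uniqueness of a solution to the deterministic Skorohod problem~(\ref{1.3}) on a domain satisfying Conditions~(A) and~(B). Since this is quoted as Theorem~4.1 of \cite{Saisho1}, my plan is to reconstruct the standard penalization-and-compactness argument that underlies such results.

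\textbf{Approach.}
The plan is to build the solution by a penalization scheme together with an a~priori estimate, and to prove uniqueness by a direct pathwise comparison using Condition~(B). First I would set up, for each penalization parameter $\varepsilon>0$ (or for each mesh of a discrete Skorohod map), an approximating pair $(\xi^\varepsilon,\phi^\varepsilon)$ that solves a regularized equation in which the reflecting term is replaced by a smooth inward-pushing drift supported near $\partial D$. Using the projection $\overline{x}$ onto $\overline{D}$, which is well-defined and single-valued whenever $\mbox{dist}(x,\overline{D})<r_0$ by Condition~(A), the penalized dynamics are well-posed and their reflecting terms $\phi^\varepsilon$ point (approximately) along directions in $\mathcal{N}_{\xi^\varepsilon}$.

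\textbf{Key steps in order.}
\begin{enumerate}[(1)]
\item Establish a uniform bound on the total variation $|\phi^\varepsilon|_T$, independent of $\varepsilon$. This is the crucial a~priori estimate: here Condition~(B) enters through the vector $l_x$ satisfying $\langle l_x,\mathbf{n}\rangle\geq 1/\beta$ for all admissible normals $\mathbf{n}$, which lets one control the increase of $|\phi^\varepsilon|$ by testing the dynamics against $l_{\xi^\varepsilon}$ and absorbing the curvature contribution coming from Condition~(A) via the $r_0$-interior-ball property.
\item Deduce equicontinuity of $\{\xi^\varepsilon\}$ and of $\{\phi^\varepsilon\}$ from the uniform variation bound together with the equation~(\ref{1.3}) itself, and extract, by the Arzel\`a--Ascoli theorem, a subsequence converging uniformly on $[0,T]$ to a limit pair $(\xi,\phi)$.
\item Pass to the limit in the dynamics and verify the defining conditions of a solution: that $\xi$ takes values in $\overline{D}$ (from $\mathrm{dist}(\xi^\varepsilon,\overline{D})\to 0$), that $\phi$ has bounded variation with $\phi_t=\int_0^t\mathbf{n}_s\,d|\phi|_s$, and that $|\phi|$ increases only on $\{\xi_s\in\partial D\}$. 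The delicate point is showing $\mathbf{n}_s\in\mathcal{N}_{\xi_s}$ in the limit, which follows from the closedness of the normal cones $\mathcal{N}_{x}=\mathcal{N}_{x,r_0}$ guaranteed by Condition~(A).
\item Prove uniqueness. Given two solutions $(\xi,\phi)$ and $(\xi',\phi')$ driven by the same $w$, I would estimate $|\xi_t-\xi'_t|^2$ (or a weighted variant $\exp(-\tfrac{1}{\delta'}(\Psi(\xi_t)+\Psi(\xi'_t)))$-type functional, should Condition~(C) be invoked) and control the cross terms $\langle \xi_t-\xi'_t,\,d\phi_t-d\phi'_t\rangle$ using the inequality from the reflection structure; a Gronwall argument then forces $\xi\equiv\xi'$ and hence $\phi\equiv\phi'$.
\end{enumerate}

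\textbf{Main obstacle.}
I expect the hard part to be Step~(1), the uniform bound on the total variation of the penalization (or Skorohod-map) sequence. In a convex domain this is immediate from monotonicity, but in the non-convex setting one only has the local interior-ball condition~(A) and the uniform-cone condition~(B), so the estimate must be run carefully on small time windows where $\xi^\varepsilon$ stays within a $\delta$-ball of a boundary point, using $l_x$ to convert the reflecting push into a controllable quantity while the $r_0$-ball property bounds the error caused by the boundary's curvature. Controlling this error term uniformly, and then patching the windowed estimates into a global bound on $[0,T]$, is the technical heart on which both the compactness and, indirectly, the uniqueness rest.
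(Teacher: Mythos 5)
First, a point of comparison: the paper does not prove Theorem~\ref{det1} at all --- it is imported verbatim as Theorem~4.1 of \cite{Saisho1} and serves as the starting point for the pathwise construction. So there is no internal proof to match your proposal against; what the paper does develop (Section~4.1) is precisely the quantitative version of your Step~(1), namely the penalization scheme~(\ref{1.5}) and the total-variation bound of Proposition~\ref{prop44}, but only for $\alpha$-H\"older drivers and for a different purpose (convergence in $M^p_G$), not to re-prove the deterministic existence theorem.

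As a proof, your proposal has genuine gaps rather than being a different but complete route. The decisive one is that Step~(1), which you yourself identify as the technical heart, is only asserted: the uniform bound on $|\phi^\varepsilon|_T$ under Conditions~(A) and~(B) is exactly the content of Lemmas~5.1 and~5.3 of \cite{SaishoetTanaka} (reproduced here as~(\ref{est98}) and Proposition~\ref{prop44}), and it requires the stopping-time decomposition of $[0,T]$ into excursions where $\overline{\xi^m}$ moves by at most $\delta/2$, the cone vector $l_x$ tested against $d\phi^m$ on each window, and the confinement result $\xi^m_t\in D_{\varepsilon_m}$ of Proposition~\ref{p4.1} (itself resting on the hitting-time argument of Lemma~\ref{l4.2}); none of this is reduced to a checkable claim in your outline. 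Second, Step~(2) does not follow as stated: a uniform bound on total variation of $\phi^\varepsilon$ does not give equicontinuity of $\{\xi^\varepsilon\}$ or $\{\phi^\varepsilon\}$; what is needed is the oscillation estimate $\Delta_{s,t}(\xi^m)\leq C(\Delta_{s,t}(w)+\varepsilon_m)$ of type~(\ref{est98}), which transfers the modulus of continuity of $w$ to the approximants. Third, in Step~(4) you gesture at the weighted functional with $\Psi$ and Condition~(C); that is neither available (the theorem assumes only~(A) and~(B)) nor necessary: uniqueness follows from the interior-ball inequality implied by Condition~(A), namely $\langle x-y,\mathbf{n}\rangle\geq -\tfrac{1}{2r_0}|x-y|^2$ for $x\in\overline{D}$, $y\in\partial D$, $\mathbf{n}\in\mathcal{N}_{y,r_0}$, which yields $|\xi_t-\xi'_t|^2\leq \tfrac{1}{r_0}\int_0^t|\xi_s-\xi'_s|^2\,d(|\phi|_s+|\phi'|_s)$ and hence $\xi\equiv\xi'$ by Gronwall with respect to the finite measure $d(|\phi|+|\phi'|)$. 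Your overall architecture (penalize, bound the variation, compactify, pass to the limit, Gronwall) is the right one, but every load-bearing estimate is named rather than proved.
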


\section{Main results}
In this section, we present our main results on the reflected $G$-Brownian motion and on reflected stochastic differential equations driven by $G$-Brownian motion.
\subsection{Reflected $G$-Brownian motion}
We replace the deterministic function $w$ in the Skorohod problem (\ref{1.3}) by the $G$-Brownian motion $B$ and establish the following equation in the ``quasi-sure'' sense:
\begin{equation}
X_{t}=x_{0}+B_{t}+K_{t},~~x_0\in \overline{D},~~0\leq t\leq T  \label{1.4}.
\end{equation}%
\begin{definition}\label{defrgb}
We call a couple of processes  $\left( X,K\right) $ solves the Skorohod problem for the $G$-Brownian motion %
 (\ref{1.4}), if there exists a polar set $A$, such that
\begin{enumerate}[(i)]
\item The processes $X$ et $K$ belong to $M_{G }^{2}\left( \left[ 0,T%
\right]; \mathbb{R}^{d}\right) $, and for all $\omega\in A^c$,
\begin{equation*}
X_{t}(\omega)=x_{0}+B_{t}(\omega)+K_{t}(\omega),~~0\leq t\leq T;
\end{equation*}
\item For all $\omega\in A^c$, $X(\omega)$ is continuous and takes values in $\overline{D}$;
\item For all $\omega\in A^c$, $K(\omega)$ is continuous and takes values in $\mathbb{R}^{d}$ with $K_0(\omega)=0$. Moreover, $K(\omega)$ is of bounded variation over $[0, T]$ and for all $t\in \left[ 0,T\right]$,
\begin{eqnarray*}
K_{t}\left( \omega \right) &=&\int_{0}^{t}\mathbf{n}_s\left(\omega\right) d\left\vert K\right\vert _{s}\left( \omega \right);\\
\left\vert K\right\vert _{t}\left( \omega \right) &=&\int_{0}^{t}\mathbf{1}%
_{\left\{ X_{s}\left( \omega \right) \in \partial D\right\} }d\left\vert
K\right\vert _{s}\left( \omega \right) ,
\end{eqnarray*}
where $\mathbf{n}_s\left(\omega\right)\in \mathcal{N}_{X_{s}\left(
\omega \right)}$,  if $X_{s}\left(
\omega \right)\in \partial D$.
\end{enumerate}
In addition, we call $X$ reflected $G$-Brownian motion on the domain $D$.
\end{definition}
We have the following existence and uniqueness theorem for the reflected $G$-Brownian motion. The proof of this theorem is postponed to the next section.
\begin{theorem}\label{trgb}
Suppose that the domain $D\subset \mathbb{R}^d$ is open and satisfies Conditions (A) and (B). Then there exists a couple $\left( X ,K \right)\in \left(M^2_G([0, T]; \mathbb{R}^d)\times M^2_G([0, T]; \mathbb{R}^d)\right)$ which solves the Skorohod problem  (\ref{1.4}) whenever $x_0\in \overline{D}$. Moreover, if the problem (\ref{1.4}) admits two solutions $(X, K)$ and $(X', K')$, then the exists a polar set $\widetilde A$, such that for all $\omega\in \widetilde A^c$,
$$
X(\omega)=X'(\omega)~~~~and~~~K(\omega)=K'(\omega),\ \ 0\leq t\leq T.
$$
\end{theorem}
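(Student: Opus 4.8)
\emph{The plan.} Uniqueness follows at once from the pathwise uniqueness of the deterministic Skorohod problem (Theorem \ref{det1}); the work is all in existence, which I would obtain by a penalization procedure. The idea is to replace the reflecting term by a Lipschitz drift, solve the resulting Lipschitz $G$-SDE in $M_G^2$ using Gao's existence result \cite{Gao}, and then pass to the limit. The bridge from the deterministic to the stochastic theory is a quantitative estimate for the penalization scheme whose convergence rate depends only on the $\alpha$-H\"older modulus of the driving path; since the $G$-Brownian motion has paths in $\mathcal{C}^{0,\alpha}$ ($\alpha<1/2$) with all moments of the H\"older seminorm finite under the upper expectation, averaging this pathwise estimate will give convergence in the Banach space $M_G^p$.

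\emph{Existence.} Using the projection $\overline{\,\cdot\,}$ onto $\overline{D}$ (well defined, and $x\mapsto n(\overline{x}-x)$ Lipschitz, on the tubular neighbourhood $\{\dist(\cdot,\overline{D})<r_0\}$ under Conditions (A),(B), after a standard globalization of the drift), I would introduce the penalized $G$-SDE
\begin{equation*}
X^n_t = x_0 + B_t + \int_0^t n\big(\overline{X^n_s} - X^n_s\big)\, ds, \qquad 0\le t\le T.
\end{equation*}
This has Lipschitz coefficients, so by \cite{Gao} it admits a unique solution $X^n\in M_G^2([0,T];\mathbb{R}^d)$ together with $K^n_t:=\int_0^t n(\overline{X^n_s}-X^n_s)\,ds\in M_G^2$. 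On the other hand, for quasi-every $\omega$ the continuous path $w=B(\omega)$ (with $w(0)=0$) has, by Theorem \ref{det1}, a unique Skorohod solution $(X(\omega),K(\omega))$ of (\ref{1.4}); these define the candidate limit, and verifying properties (i)--(iii) of Definition \ref{defrgb} for it is immediate pathwise.

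\emph{Convergence.} The analytic heart of the argument is a deterministic estimate of the form
\begin{equation*}
\sup_{0\le t\le T}\big|\xi^n_t - \xi_t\big| \le C\big(1 + \|w\|_{\alpha}\big)^{q}\, n^{-\gamma},
\end{equation*}
where $\xi^n$ is the penalization sequence, $\xi$ the Skorohod solution, $\|w\|_{\alpha}$ the $\alpha$-H\"older seminorm, and $C,q,\gamma>0$ depend only on $D$, $T$ and $\alpha$; this is the refinement of the Saisho--Tanaka estimate in which the H\"older coefficient is made explicit (and is where Condition (C) and the associated a priori bound enter). Applying it pathwise with $w=B(\omega)$, raising to the power $p$, and taking the upper expectation, I would invoke the finiteness of $\mathbb{E}\big[\|B\|_{\alpha}^{pq}\big]$ (a Kolmogorov/$G$-BDG consequence, cf. Lemma \ref{th 2.1.1}) to conclude
\begin{equation*}
\mathbb{E}\Big[\sup_{0\le t\le T}|X^n_t - X_t|^p\Big] \longrightarrow 0 \qquad (n\to\infty),
\end{equation*}
so that $X^n\to X$ and, similarly, $K^n\to K$ in $M_G^p$ for each $p\ge 2$, hence in $M_G^2$. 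Since $M_G^2$ is complete and each $X^n,K^n$ lies in it, the limits satisfy $X,K\in M_G^2([0,T];\mathbb{R}^d)$, which completes existence.

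\emph{Uniqueness and the main obstacle.} If $(X,K)$ and $(X',K')$ both solve (\ref{1.4}), then off the union of their polar exceptional sets both $(X(\omega),K(\omega))$ and $(X'(\omega),K'(\omega))$ are Skorohod solutions for the same data $(x_0,B(\omega))$, so Theorem \ref{det1} forces them to agree there; that union is again polar, giving the desired $\widetilde{A}$. I expect the decisive difficulty to be the convergence step: proving the deterministic penalization estimate with \emph{explicit} $\alpha$-H\"older dependence and the right powers $q$, and then controlling $\mathbb{E}\big[\|B\|_\alpha^{pq}\big]$ so that pathwise convergence upgrades to $M_G^p$-convergence. Measurability of the limit is obtained for free, since each approximant $X^n$ already belongs to $M_G^2$ and $M_G^2$ is closed.
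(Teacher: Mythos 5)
Your architecture is the same as the paper's: penalize with the drift $m\bigl(\overline{X^m_s}-X^m_s\bigr)$ (which is exactly $-\tfrac{m}{2}\nabla U(X^m_s)$ for the paper's $U$), solve the resulting Lipschitz $G$-SDE by Gao's theorem, define the candidate limit pathwise via Theorem \ref{det1}, and get uniqueness from pathwise uniqueness. The gap is in the convergence step, in two places. First, the deterministic estimate is not of the polynomial form $C(1+\|w\|_\alpha)^q n^{-\gamma}$ you posit: what the Saisho--Tanaka machinery gives is $\sup_t|\xi^n_t-\xi_t|^2\le 4\varepsilon^\alpha_n(w)\bigl(|\phi^n|^0_T+|\phi|^0_T\bigr)\exp\bigl\{\tfrac{\gamma}{2\beta}\bigl(|\phi^n|^0_T+|\phi|^0_T\bigr)\bigr\}$, and the total-variation bound (\ref{5.1}) is itself exponential in $\|w\|_T$, so the raw pathwise bound is an exponential of an exponential of the path norms. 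Raising it to the power $p$ and taking $\mathbb{E}$ directly would require moments that are not available; the paper instead truncates on $\{\|B\|_\alpha<M_0\}$, where the whole right-hand side collapses to a deterministic constant times $m^{-\alpha}$, and handles the complement by Cauchy--Schwarz against the uniform $L^4$ bound of Proposition \ref{prop47} together with Markov's inequality. Second, the deterministic estimates are only valid once $\varepsilon^\alpha_m(w)<\tfrac{\delta}{180\beta\lambda(w)}\wedge r_0/2$, a threshold depending on the path through both $\|w\|_\alpha$ and $\|w\|_T$; you must control the exceptional set $\overline{A}^m$ where this fails (capacity $O(m^{-p})$, which needs the exponential moment $\mathbb{E}[\exp\{c\|B\|_T\}]<\infty$ from Luo--Wang, not merely a Kolmogorov/BDG polynomial moment bound) and there bound $|K^m|^0_T$ crudely by $\tfrac{m}{2}T\Lambda$ so that the capacity decay absorbs the factor $m^p$. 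Without these two devices the passage from the pathwise estimate to $M^p_G$-convergence does not close. A minor point: Condition (C) plays no role in Theorem \ref{trgb}; it is only used for the a priori estimate (\ref{est10}) for the reflected $G$-SDE.
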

\subsection{Reflected stochastic differential equations driven by $G$-Brownian motion}
In addition to the reflected $G$-Brownian motion, we shall study reflected stochastic differential equations driven by $G$-Brownian motion, which is formulated as
\begin{align}
X_{t}=x_{0}+\int_{0}^{t}f\left(
s,X_{s}\right) ds&+\int_{0}^{t}h^{ij}\left( s,X_{s}\right) d\langle B^i, B^j\rangle_{s}\notag\\&+\int_{0}^{t}g^{j}\left( s,X_{s}\right) dB^j_{s}+K_{t},~~~0\leq t\leq T,~~~~\text{q.s.}.  \label{e1}
\end{align}
Here we adopt the Einstein summation convention.
In (\ref{e1}), the process $\langle B, B\rangle$ is the covariation matrix of the $d$-dimensional $G$-Brownian motion $B$. In what follows, we assume that the functions $f$, $h$, $g$ satisfy the following conditions:
\begin{assumption}
The functions $f$, $h^{i j}$, $g^{j}: \Omega\times [0, T]\times \overline{D}\longrightarrow \mathbb{R}^d$, $i$, $j=1, 2, \ldots d$, are functions such that \\
(\textbf{H1}) For all $x\in \overline{D}$, the processes $f(\cdot, x)$, $h^{i j}(\cdot, x)$, $g^{j}(\cdot, x)$ belong to
$M_G^{2}\left( \left[
0,T\right]; \mathbb{R}^d \right)$;\\
(\textbf{H2}) The functions $f$, $h^{i j}$, $g^{j}$ are uniformly bounded by $L_0$ and uniformly $L_0$-Lipschitz, i.e., there exists a constant $L_0>0$ such that for all $(\omega, t)\in \Omega\times [0, T]$,
\begin{equation*}
\|f\left( t,x\right) -f\left( t,y\right)\|+\left\|
h^{ij}\left( t,x\right) -h^{ij}\left( t,y\right) \right\| + \left\|
g^{j}\left( t,x\right) -g^{j}\left( t,y\right) \right\|\leq L_0\left\|
x-y\right\|,~~~~~\forall x,\ y\in \overline{D},
\end{equation*}
where $\|\cdot\|$ denotes the Hilbert-Schmidt norm for matrices.
\end{assumption}
\begin{definition}
We call a couple of processes  $\left( X,K\right) $ solves the Skorohod stochastic differential equation (\ref{e1}),
 if there exists a polar set $A$, such that
\begin{enumerate}[(i)]
\item The processes $X$ et $K$ belong to $M_G^{2}\left( \left[ 0,T%
\right]; \mathbb{R}^{d}\right) $ and satisfies (\ref{e1});
\item For all $\omega\in A^c$, $X(\omega)$  takes values in $\overline{D}$;
\item For all $\omega\in A^c$, $K(\omega)$ takes values in $\mathbb{R}^{d}$ with $K_0(\omega)=0$. Moreover, $K(\omega)$ is of bounded variation over $[0, T]$ and for all $t\in \left[ 0,T\right]$,
\begin{eqnarray*}
K_{t}\left( \omega \right) &=&\int_{0}^{t}\mathbf{n}_s\left(\omega\right) d\left\vert K\right\vert _{s}\left( \omega \right);\\
\left\vert K\right\vert _{t}\left( \omega \right) &=&\int_{0}^{t}\mathbf{1}%
_{\left\{ X_{s}\left( \omega \right) \in \partial D\right\} }d\left\vert
K\right\vert _{s}\left( \omega \right) ,
\end{eqnarray*}
where $\mathbf{n}_s\left(\omega\right)\in \mathcal{N}_{X_{s}\left(
\omega \right)}$,  if $X_{s}\left(
\omega \right)\in \partial D$.
\end{enumerate}
\end{definition}
Using a fixed point type argument, we shall prove in the next section the following existence and uniqueness theorem for the Skorohod stochastic differential equation (\ref{e1}).
\begin{theorem}\label{trgsde}
Suppose that the domain $D\subset \mathbb{R}^d$ is open and  satisfies Conditions (A) and (B). Then there exists a unique couple $\left( X ,K \right)\in \left(M^2_G([0, T]; \mathbb{R}^d)\times M^2_G([0, T]; \mathbb{R}^d)\right)$ which solves the Skorohod stochastic differential equation (\ref{e1}) whenever $x_0\in \overline{D}$ and the coefficients $f$, $h$, $g$ satisfy Assumptions (H1) and (H2).
\end{theorem}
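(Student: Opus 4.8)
The plan is to realize the solution of \eqref{e1} as a fixed point of the map obtained by freezing the coefficients along a given process and solving the resulting reflection problem pathwise. Given $Y\in M^2_G([0,T];\mathbb{R}^d)$ taking values in $\overline{D}$, I would form the continuous driving process
$$
U^Y_t := x_0+\int_0^t f(s,Y_s)\,ds+\int_0^t h^{ij}(s,Y_s)\,d\langle B^i,B^j\rangle_s+\int_0^t g^j(s,Y_s)\,dB^j_s,
$$
which by (H1), (H2) and the existence of a continuous $\bar C$-modification of the $G$-It\^o integral is, q.s., a continuous path starting from $x_0$. Hence for q.s.\ every $\omega$ Theorem~\ref{det1} furnishes a unique pathwise solution $(X,K)$ of the Skorohod problem $X_t=U^Y_t+K_t$, with $X$ valued in $\overline{D}$ and $K$ reflecting along $\mathcal{N}_{X}$; I set $\Phi(Y):=X$. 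Before iterating I must check that $\Phi$ maps into $M^2_G$: this is done exactly as for the reflected $G$-Brownian motion of Theorem~\ref{trgb}, by approximating $(X,K)$ in $M^2_G$ through the penalized Lipschitz equations of \cite{Gao}, the convergence being controlled by the $\alpha$-H\"older estimates for the deterministic Skorohod map established earlier in the paper together with the fact that $B$ and the $G$-It\^o integrals are supported on $\mathcal{C}^{0,\alpha}$ for $\alpha<1/2$.

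The heart of the proof is a Lipschitz estimate for $\Phi$. Let $X=\Phi(Y)$, $X'=\Phi(Y')$ with drivers $U^Y,U^{Y'}$ and reflection terms $K,K'$. Following Lions--Sznitman \cite{LionsetSznitman} and Saisho \cite{Saisho1,SaishoetTanaka}, I would introduce the weight
$$
\Theta_t:=\exp\!\Big(-\tfrac{1}{\delta'}\big(\Psi(X_t)+\Psi(X'_t)\big)\Big),
$$
which by Condition~(C) is bounded above and below by $e^{\pm 2L_\Psi/\delta'}$, and apply $G$-It\^o's formula to $\Theta_t|X_t-X'_t|^2$. When $X_s\in\partial D$ with normal $\mathbf{n}_s$, Condition~(C) applied to $x=X_s$, $y=X'_s$ yields $\langle X_s-X'_s,\mathbf{n}_s\rangle\le \tfrac{1}{\delta'}\langle\nabla\Psi(X_s),\mathbf{n}_s\rangle|X_s-X'_s|^2$, and symmetrically for $K'$. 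The decisive point is that these reflection contributions are exactly cancelled by the first-order terms produced when $\Theta$ is differentiated, so that no total-variation quantity survives in the constants. What remains is the source term $2\Theta_s\langle X_s-X'_s,\,dU^Y_s-dU^{Y'}_s\rangle$ together with second-order terms carrying $\nabla\Psi,\nabla^2\Psi$ against $d\langle B,B\rangle$; the latter are bounded, via \eqref{pquar} and (H2), by $C|X_s-X'_s|^2\,ds$.

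Taking $\mathbb{E}[\sup_{s\le t}(\cdot)]$, estimating the symmetric $G$-It\^o integrals by the BDG inequality of Lemma~\ref{th 2.1.1}, controlling the finite-variation driver through \eqref{pquar}, and using (H2) with Young's inequality, I expect to reach
$$
\mathbb{E}\Big[\sup_{s\le t}|X_s-X'_s|^2\Big]\le C\int_0^t \mathbb{E}\Big[\sup_{r\le s}|X_r-X'_r|^2\Big]ds+C\,\mathbb{E}\Big[\int_0^t|Y_s-Y'_s|^2\,ds\Big] ,
$$
whence, after Gronwall and the interchange $\mathbb{E}[\int_0^t(\cdot)\,ds]\le\int_0^t\mathbb{E}[\,\cdot\,]\,ds$ valid for the upper expectation,
$$
\mathbb{E}\Big[\sup_{s\le t}|X_s-X'_s|^2\Big]\le C'\int_0^t\mathbb{E}\Big[\sup_{r\le s}|Y_r-Y'_r|^2\Big]ds .
$$
Starting from $Y^0\equiv x_0$ (whose first iterate has finite $\mathbb{E}[\sup_{s\le T}|\cdot|^2]$ by the a priori estimate for the reflected problem) and iterating along the Picard sequence $Y^{n+1}=\Phi(Y^n)$ produces the factorial bound $\mathbb{E}[\sup_{s\le T}|Y^{n+1}_s-Y^{n}_s|^2]\le \tfrac{(C'T)^n}{n!}\,\mathbb{E}[\sup_{s\le T}|Y^{1}_s-Y^{0}_s|^2]$, so $\{Y^n\}$ is Cauchy in $M^2_G$; its limit $X$, with $K:=X-U^X$, solves \eqref{e1}, and uniqueness follows by feeding two solutions into the same estimate and applying Gronwall. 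I expect the main obstacle to be the weighted $G$-It\^o computation: verifying rigorously, in the quasi-sure sense, the cancellation of the reflection terms while correctly handling the non-deterministic quadratic variation and the fact that it is the symmetric integrals, not the paths of $K$, that must be controlled --- in particular justifying the passage between the pathwise Skorohod construction, the $M^2_G$ norm, and the BDG bound on the stopped martingale part.
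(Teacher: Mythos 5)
Your proposal follows essentially the same route as the paper: the paper's Proposition \ref{propest} is precisely your Lipschitz estimate for the freeze-and-reflect map, obtained from the same weighted functional $\exp\{-\tfrac{1}{\delta'}(\Psi(X)+\Psi(X'))\}\,|X-X'|^2$ with Condition (C) making the $d|K|$-integrals nonpositive, followed by the BDG inequality, Gronwall and a Picard iteration whose membership in $M^2_G$ is secured by the penalization results. The only cosmetic differences are that the paper works with fourth moments (it squares the pathwise inequality before invoking Lemma \ref{th 2.1.1}, which is stated only for $p\ge 2$, whereas your second-moment version needs a $p=1$ BDG bound), and that both your argument and the paper's in fact use Condition (C), which is absent from the theorem's stated hypotheses.
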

\section{Proofs}
In this section, we shall prove Theorem \ref{trgb} and \ref{trgsde}. First, we recall the results for deterministic Skorohod problem in Saisho and Tanaka \cite{SaishoetTanaka} and provide an estimate when the function $w$ is $\alpha$-H\"older continuous, $\alpha\in (0, 1/2)$.
\subsection{Estimates for the deterministic Skorohod problem}
In (\ref{1.3}), we assume in addition that $w$ is an $\alpha$-H\"older continuous function on $[0, T]$, where $\alpha\in (0, 1/2)$, i.e.,
\begin{equation*}
\left\Vert w \right\Vert _{\alpha }=\sup_{0\leq s<t\leq T}\frac{%
\left\vert w _{t}-w _{s}\right\vert }{\left\vert t-s\right\vert
^{\alpha }}<\infty.
\end{equation*}%
Set
\begin{equation*}
\Delta _{s,t}(w ):=\sup \left\{\left\vert w_{t_{2}}-w
_{t_{1}}\right\vert :s\leq t_{1}<t_{2}\leq t\right\},
\end{equation*}%
and  $\Vert w\Vert_T:=\sup\{|w|_t: 0\leq t\leq T\}$.\\

We recall the penalization method in \cite{LionsetSznitman, SaishoetTanaka} and define a sequence of equations: for $m\in \mathbb{N}^*$,
\begin{equation}
\xi_{t}^{m}=x_{0}+w_{t}-\frac{m}{2}\int_{0}^{t}\nabla U(\xi_{s}^{m})ds,  \label{1.5}
\end{equation}
where $x_0\in \overline{D}$ and $U$ is a function satisfying (a) $U\in \mathcal C^{1}(\mathbb{R}^{d})$ and $U\geq 0$; (b) $U(x)=\left\vert x-\overline{x}\right\vert ^{2}$, if ${\rm dist}(x,\overline{D})\leq r_{0}/2$ ($r_{0}$ is from Condition (A)); (c) $\nabla U$ is bounded and Lipschitz. Indeed, the existence of such function $U$ is ensured by Condition (A) and (B). We denote by $2L$ the Lipschitz constant of $U$. The equation (\ref{1.5}) admits a unique solution $\xi^m$ which is continuous on $[0, T]$.\\

For each $m\in \mathbb{N}^*$, we define
\begin{equation}\label{eps}
\varepsilon
_{m}^{\alpha }\left( w \right) :=\frac{12e^{L}}{m^{\alpha }}\left\Vert
w \right\Vert _{\alpha }.
\end{equation}%
For $\varepsilon >0$, we note
\begin{equation*}
D_{\varepsilon }:=\left\{ x\in \mathbb{R}^{d},~{\rm dist}\left( x,\overline{D}%
\right) <\varepsilon \right\} .
\end{equation*}
Additionally, set
\begin{equation*}
\widetilde{\xi}_{t}^{m} :=\xi_{t/m}^{m},
\end{equation*}
it is easy to verify that
\begin{equation*}
\widetilde{\xi}_{t}^{m} =x_{0}+\widetilde{w}_{t/m} -\frac{1}{2}\int_{0}^{t}\nabla U(\widetilde{\xi}_{s}^{m} )ds,~~~0\leq t\leq mT.
\end{equation*}%

Similar to Lemma 4.2 in \cite{SaishoetTanaka}, we have the lemma below.
\begin{lemma}
\label{l4.2}  Assume that the domain $D\subset\mathbb{R}^d$ is open and satisfies Conditions (A) and (B). For each $m\in \mathbb{N}^{\ast }$ such that $0<\varepsilon _{m}^{\alpha }\left( w \right)
<r_{0}/2$ and for all $m'\geq m$, if there exists $u\in (0, m'T)$ such that
$\widetilde{\xi}_{u}^{m'} \in \partial D_{\varepsilon
_{m}^{\alpha }\left( w \right) /2}$, then $\{\widetilde{\xi}_{t}^{m'}\}_{u\leq t\leq T}$ hits  $\partial D_{\varepsilon
_{m}^{\alpha }\left( w\right) /3}$ before hitting $\partial
D_{\varepsilon _{m}^{\alpha }\left( w \right) }$.
\end{lemma}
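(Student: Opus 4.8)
The plan is to reduce everything to the scalar quantity $\dist(\widetilde{\xi}^{m'}_{t},\overline D)$ and to compare $\widetilde{\xi}^{m'}$ with the solution of the drift-only equation. Write $\varepsilon:=\varepsilon_m^\alpha(w)$ and note that by hypothesis $\varepsilon<r_0/2$. On the tube $\{x:\dist(x,\overline D)<r_0/2\}$ the function $U$ coincides with $|x-\overline x|^2=\dist(x,\overline D)^2$, its gradient is $\nabla U(x)=2(x-\overline x)$ (the projection $\overline x$ being unique there by Condition (A)), and the penalization drift $-\tfrac12\nabla U(x)=-(x-\overline x)$ points from $x$ toward $\overline x$, i.e.\ inward. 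Since $m'\ge m$, the driving increment $\theta(t):=w_{t/m'}-w_{u/m'}$ satisfies $|\theta(t)|\le \|w\|_\alpha (m')^{-\alpha}(t-u)^\alpha\le \|w\|_\alpha m^{-\alpha}(t-u)^\alpha=\tfrac{\varepsilon}{12e^L}(t-u)^\alpha$ by the definition \eqref{eps} of $\varepsilon$; this is the only place the $\alpha$-H\"older norm enters.

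Next I would freeze the starting point and introduce the auxiliary curve $z$ solving $z_t=\widetilde{\xi}^{m'}_{u}-\tfrac12\int_u^t\nabla U(z_s)\,ds$. Because $\tfrac{d}{dt}U(z_t)=\langle\nabla U(z_t),\dot z_t\rangle=-2U(z_t)$ on the tube, $z$ moves strictly inward with $\dist(z_t,\overline D)=\tfrac{\varepsilon}{2}e^{-(t-u)}$; in particular $z$ stays in the tube for all $t\ge u$. Subtracting the two integral equations and using that $\nabla U$ is Lipschitz with constant $2L$, a Gronwall argument gives $|\widetilde{\xi}^{m'}_{t}-z_t|\le\big(\sup_{u\le s\le t}|\theta(s)|\big)e^{L(t-u)}\le \tfrac{\varepsilon}{12e^L}(t-u)^\alpha e^{L(t-u)}$.

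Finally I would run the comparison on the window $[u,t^{**}]$ with $t^{**}:=u+\log 2$, on which $\dist(z_{t^{**}},\overline D)=\tfrac{\varepsilon}{4}$ and, since $\log2<1$ forces $(t-u)^\alpha e^{L(t-u)}\le (\log2)^\alpha 2^L\le e^L$, the separation is controlled by $|\widetilde{\xi}^{m'}_{t}-z_t|\le \varepsilon/12$ throughout. Two conclusions then follow from the triangle inequality and the $1$-Lipschitz continuity of $\dist(\cdot,\overline D)$: on $[u,t^{**}]$ one has $\dist(\widetilde{\xi}^{m'}_{t},\overline D)\le \tfrac{\varepsilon}{2}+\tfrac{\varepsilon}{12}<\varepsilon$, so $\widetilde{\xi}^{m'}$ never reaches $\partial D_\varepsilon$; while $\dist(\widetilde{\xi}^{m'}_{t^{**}},\overline D)\le\tfrac{\varepsilon}{4}+\tfrac{\varepsilon}{12}=\tfrac{\varepsilon}{3}$, so, starting from $\dist=\varepsilon/2$ at time $u$, continuity and the intermediate value theorem force $\widetilde{\xi}^{m'}$ to meet $\partial D_{\varepsilon/3}$ at some time $\le t^{**}$. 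Hence $\partial D_{\varepsilon/3}$ is hit before $\partial D_\varepsilon$, as asserted. (One also checks $7\varepsilon/12<r_0/2$, so that $\widetilde{\xi}^{m'}$ itself stays in the tube on the window, legitimising the formula for $\nabla U$.)

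The delicate point is the calibration of the constant $12e^L$ in \eqref{eps}: it must be simultaneously large enough that the noise-induced separation stays below $\varepsilon/12$ and small enough that a single $O(1)$ window (here of length $\log 2$) lets the inward drift carry $z$ from $\varepsilon/2$ down to $\varepsilon/4$, so that the two estimates ``stay below $\varepsilon$'' and ``drop to $\varepsilon/3$'' close at once. Matching these inequalities, together with verifying that both $\widetilde{\xi}^{m'}$ and $z$ remain inside the tube of radius $r_0/2$ where $U=\dist(\cdot,\overline D)^2$, is where the argument must be handled carefully; the underlying mechanism (inward drift plus a Gronwall comparison against the H\"older modulus of $w$) is then routine once the constant is fixed.
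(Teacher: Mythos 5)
Your proof is correct and follows essentially the same route as the paper's: the same drift-only auxiliary curve started at $\widetilde{\xi}^{m'}_{u}$, the same window of length $\log 2$ over which its distance to $\overline{D}$ decays from $\varepsilon/2$ to $\varepsilon/4$, and the same Gronwall bound $\varepsilon/12$ on the separation, closed by the $1$-Lipschitz property of $\dist(\cdot,\overline{D})$ and the intermediate value theorem. The only (harmless) omission is the explicit treatment of the case $u+\log 2>m'T$, which the paper handles as a second bullet and which your estimate already covers, since on $[u,m'T]$ the distance stays below $\varepsilon_m^{\alpha}(w)$ and the path then simply never reaches $\partial D_{\varepsilon_m^{\alpha}(w)}$.
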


\begin{proof}
This lemma can be proved by slightly modifying the proof of Lemma 4.2 in \cite{SaishoetTanaka}. Precisely, we could verify that for the given $\varepsilon
_{m}^{\alpha}$, the number $m$ itself is large enough to ensure Lemma 4.2 in \cite{SaishoetTanaka} if $\varepsilon_m^\alpha(w)$ is defined as (\ref{eps}). For the convenience of the reader, we briefly prove this lemma.\\

Fix $m'\geq m$ and suppose that $\widetilde{\xi}_{u}^{m'} \in \partial D_{\varepsilon
_{m}^{\alpha }\left( w \right) /2}$. Consider the auxiliary equation:
\begin{equation*}
\eta _t=\widetilde{\xi}_{u}^{m'} -\frac{1}{2}%
\int_{u}^{t}\nabla U(\eta _s)ds,~~~t\geq u.
\end{equation*}%
From Lemma 4.1 in \cite{SaishoetTanaka}, the above equation is solved by
\begin{equation*}
\eta _t=\overline{\widetilde{\xi}_{u}^{m'}}-\left(\overline{%
\widetilde{\xi}_{u}^{m'}}-\widetilde{\xi}_{u}^{m'}\right)\exp \{-(t-u)\},
\end{equation*}%
and the function $\eta$ satisfies
that for all $t\geq u$,
\begin{equation*}
\left\vert \overline{\eta _t}-\eta _t\right\vert =\frac{\varepsilon
_{m}^{\alpha }(w)}{2}\exp \{-(t-u)\}.
\end{equation*}%
Then, we denote
\begin{equation*}
u^{\prime }: =\inf \left\{t>u:\eta _t\in \partial D_{\varepsilon _{m}^{\alpha
}\left(w \right) /4}\right\},
\end{equation*}%
and  it is obvious that $u^{\prime }=u+\log 2<u+1$. \\

On the other hand, for $u\leq t\leq m'T$, we have
\begin{eqnarray}
\widetilde{\xi}_{t}^{m'} -\eta _t = w_{t/m'} -w_{u/m'}-\frac{1}{2}%
\int_{u}^{t}\left(\nabla U(\widetilde{\xi}_{s}^{m'} )-\nabla
U(\eta _s)\right) ds,\notag
\end{eqnarray}
which implies
\begin{eqnarray}
\left\vert \widetilde{\xi}_{t}^{m'}-\eta _t\right\vert
\leq \left\vert {w}_{t/m'} -w_{u/m'} \right\vert +L\int_{u}^{t}\left|\widetilde{\xi}_{s}^{m'} -\eta _s\right|ds.  \label{46}
\end{eqnarray}
\begin{itemize}
\item If $u^{\prime }\leq m'T$, then for all $u\leq t\leq u^{\prime
}$, we can deduce that
\begin{equation*}
 \left\vert {w}_{t/m'} -w_{u/m'} \right\vert \leq \frac{\left\Vert
w \right\Vert _{\alpha }}{{m'}^{\alpha }}\leq \frac{\left\Vert
w \right\Vert _{\alpha }}{{m}^{\alpha }}=\frac{\varepsilon
_{m}^{\alpha }\left( w \right)}{12e^L}.
\end{equation*}%
We apply Gronwall's lemma to (\ref{46}) and obtain
\begin{equation*}
\left\vert \widetilde{\xi}_{t}^{m'} -\eta _t\right\vert \leq \frac{\varepsilon
_{m}^{\alpha }\left( w \right)}{12e^L} e^{L(t-u)}<\frac{\varepsilon
_{m}^{\alpha }\left( w \right) }{12},~~~u\leq t\leq u^{\prime }.
\end{equation*}%
Therefore, for $u\leq t\leq u^{\prime }$,%
\begin{eqnarray*}
\left\vert \overline{\widetilde{\xi}_{t}^{m'}}-\widetilde{\xi}%
_{t}^{m'}\right\vert  \leq \left\vert \overline{\eta
_t}-\eta _t\right\vert +\left\vert \widetilde{\xi}_{t}^{m'}-\eta _t \right\vert
<\frac{\varepsilon _{m}^{\alpha }\left( w \right) }{2}+\frac{%
\varepsilon _{m}^{\alpha }\left( w \right) }{12}
<\varepsilon _{m}^{\alpha }\left( w \right),
\end{eqnarray*}%
whereas
\begin{eqnarray}\notag
\left\vert \overline{\widetilde{\xi}_{u^{\prime }}^{m'} }-%
\widetilde{\xi}_{u^{\prime }}^{m'}\right\vert  \leq
\left\vert \overline{\eta_{u^{\prime }}}-\eta_{u^{\prime }}\right\vert
+\left\vert \widetilde{\xi}_{u'}^{m'}-\eta_{u'} \right\vert
<\frac{\varepsilon _{m}^{\alpha }\left( w \right) }{4}+\frac{%
\varepsilon _{m}^{\alpha }\left( w \right) }{12}
=\frac{\varepsilon _{m}^{\alpha }\left( w \right) }{3},
\end{eqnarray}%
which implies that $\{\widetilde{\xi}_{t}^{m'}\}_{u\leq t\leq u'}$ hits $\partial D_{\varepsilon
_{m}^{\alpha }\left( w\right) /3}$ before hitting $\partial
D_{\varepsilon _{m}^{\alpha }\left( w \right) }$.
\item If $u^{\prime }>m'T$, then we could repeat the procedure above to prove that for $u\leq t\leq m'T$, $\left\vert \overline{\widetilde{\xi}_{t}^{m'}}-\widetilde{\xi}%
_{t}^{m'}\right\vert <\varepsilon _{m}^{\alpha }\left( w \right)$, which implies that $\{\widetilde{\xi}_{t}^{m'}\}_{u\leq t\leq m'T}$ never hits $\partial
D_{\varepsilon _{m}^{\alpha }\left( w \right) }$.
\end{itemize}
\end{proof}
We now give a proposition which is a straightforward corollary of Lemma \ref{l4.2}. The proof of this proposition is omitted and we refer the reader to Proposition 4.1 in \cite{SaishoetTanaka}.
\begin{proposition}
\label{p4.1}
Assume that the domain $D\subset\mathbb{R}^d$ is open and satisfies Conditions (A) and (B). For each $m\in \mathbb{N}^{\ast }$ such that $0<\varepsilon _{m}^{\alpha }\left( w \right)
<r_{0}/2$ and for all $m'\geq m$,
\begin{equation*}
\xi_{t}^{m'} \in D_{\varepsilon _{m}^{\alpha }\left(
w \right) },\quad\quad 0\leq t\leq T.
\end{equation*}
\end{proposition}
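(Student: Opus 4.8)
The plan is to deduce the confinement $\xi_t^{m'}\in D_{\varepsilon_m^\alpha(w)}$ on all of $[0,T]$ directly from the local ``return'' property of Lemma~\ref{l4.2}, by a continuity-and-contradiction argument. First I would reduce to the time-changed process: since $\widetilde\xi_t^{m'}=\xi_{t/m'}^{m'}$, the range $\{\xi_s^{m'}:0\le s\le T\}$ coincides with $\{\widetilde\xi_t^{m'}:0\le t\le m'T\}$, so the assertion is equivalent to $\widetilde\xi_t^{m'}\in D_{\varepsilon_m^\alpha(w)}$ for every $t\in[0,m'T]$, which is exactly the form to which Lemma~\ref{l4.2} speaks. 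Writing $\varepsilon:=\varepsilon_m^\alpha(w)$ and $\rho(t):=\mathrm{dist}(\widetilde\xi_t^{m'},\overline D)$, I note that $\rho$ is continuous and that $\rho(0)=0$ because $x_0\in\overline D$.

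Next I would suppose, for contradiction, that the path leaves $D_\varepsilon$, and set $t_1:=\inf\{t\in[0,m'T]:\rho(t)\ge\varepsilon\}$. Continuity gives $\rho(t_1)=\varepsilon$, i.e. $\widetilde\xi_{t_1}^{m'}\in\partial D_\varepsilon$, with $0<t_1\le m'T$. Since $\rho(0)=0<\varepsilon/2<\varepsilon=\rho(t_1)$, the intermediate value theorem produces a last crossing of the level $\varepsilon/2$ before $t_1$; I set $u:=\sup\{t\in[0,t_1):\rho(t)=\varepsilon/2\}$, so that $u\in(0,m'T)$, $\widetilde\xi_u^{m'}\in\partial D_{\varepsilon/2}$, and $\rho(t)>\varepsilon/2$ for all $t\in(u,t_1]$.

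This configuration is precisely the one excluded by Lemma~\ref{l4.2}. Indeed, on $(u,t_1]$ we have $\rho(t)>\varepsilon/2>\varepsilon/3$, so $\widetilde\xi^{m'}$ does not meet $\partial D_{\varepsilon/3}$ there, yet it meets $\partial D_\varepsilon$ at $t_1$; thus, starting from $\widetilde\xi_u^{m'}\in\partial D_{\varepsilon/2}$, the path reaches $\partial D_\varepsilon$ strictly before $\partial D_{\varepsilon/3}$. This contradicts Lemma~\ref{l4.2}, whose two cases assert either that $\partial D_{\varepsilon/3}$ is reached before $\partial D_\varepsilon$, or that $\partial D_\varepsilon$ is never reached over the lifetime. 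Hence no such $t_1$ exists, $\rho<\varepsilon$ throughout $[0,m'T]$, and the proposition follows.

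The argument is essentially routine once Lemma~\ref{l4.2} is available; the only delicate points are the bookkeeping ones. I would take care to confirm that the last-crossing time $u$ really lies in the open interval $(0,m'T)$ demanded by the hypothesis of Lemma~\ref{l4.2} (this uses $\rho(0)=0\ne\varepsilon/2$ together with $t_1\le m'T$), and that the horizon appearing in the conclusion of Lemma~\ref{l4.2} genuinely covers the interval $[u,t_1]$, so that comparing the two hitting times is legitimate over the full lifetime $[0,m'T]$ of $\widetilde\xi^{m'}$ and not merely over $[0,T]$.
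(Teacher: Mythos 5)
Your argument is correct and is precisely the first-exit/last-crossing contradiction that the paper has in mind: it omits the proof and defers to Proposition 4.1 of Saisho--Tanaka, whose argument is exactly this deduction from the return property of Lemma~\ref{l4.2}. The bookkeeping points you flag (that $u\in(0,m'T)$ and that the comparison of hitting times is made over $[0,m'T]$, not $[0,T]$) are handled correctly.
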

In particular, the results in the remainder of this subsection is based on the fact that if  $m\in \mathbb{N}^{\ast }$ is large enough such that $0<\varepsilon _{m}^{\alpha }\left( w \right)
<r_{0}/2$ then
\begin{equation*}
\xi_{t}^{m} \in D_{\varepsilon _{m}^{\alpha }\left(
w \right) },\quad\quad 0\leq t\leq T.
\end{equation*}

For $m$, $n\in \mathbb{N}^*$, set
\begin{eqnarray*}
T_{m,0} &=&\inf \left\{t\geq 0:\overline{\xi_{t}^{m} }\in
\partial D\right\};  \notag \\
t_{m,n} &=&\inf \left\{ t>T_{m,n-1}:\left\vert \overline{\xi_{t}^{m}}-\overline{\xi_{T_{m,n-1}}^{m} }%
\right\vert \geq \delta /2\right\}; \label{0.7} \\
T_{m,n} &=&\inf \left\{t\geq t_{m,n}:\overline{\xi_{t}^{m} }%
\in \partial D\right\},\notag
\end{eqnarray*}
where the constant $\delta$ is from Condition (B).
Moreover, denote by
$$
\phi^m_t:=-\frac{m}{2}\int_{0}^{t}\nabla U(\xi_{s}^{m})ds,~~0\leq
t\leq T.$$

In the remainder of this subsection, we shall provide an estimate of $|\phi^m|^0_T$. First, we prove that for sufficient large $m$, there is a lower bounded for $T_{m, n}-T_{m, n-1}$.\\

For simplicity, note $\gamma:=\frac{2\kappa^2(r_0/2)\beta}{r_0}$ and $\lambda(w):=\exp\left\{\gamma\left(\Vert w\Vert_T+\delta\right)\right\}$, where $\kappa(r_0/2)$ is a constant from Lemma 2.1 in \cite{SaishoetTanaka} such that
for all $|x-\overline{x}|<r_0/2$ and $|y-\overline{y}|<r_0/2$,  $|\overline{x}-\overline{y}|\leq \kappa(r_0/2)|x-y|$. Obviously, $\lambda> 1$.
\begin{lemma}
Assume that the domain $D\subset \mathbb{R}^d$ is open and  satisfies Conditions (A) and (B). For each $m\in \mathbb{N}^{\ast }$ such that $0<\varepsilon _{m}^{\alpha }\left( w \right)
<\frac{\delta}{180\beta\lambda(w)} \wedge r_{0}/2$ and for each $n\geq 1$ such that $T_{m, n}<\infty$,
$$
\left|T_{m, n}-T_{m, n-1}\right|\geq h:=\left(\frac{\delta}{36\beta\lambda(w)\Vert w\Vert_\alpha}\right)^{1/\alpha},
$$
\end{lemma}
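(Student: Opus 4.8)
The plan is to prove the stronger inequality $t_{m,n}-T_{m,n-1}\ge h$; since by construction $T_{m,n}\ge t_{m,n}>T_{m,n-1}$, this immediately gives $|T_{m,n}-T_{m,n-1}|\ge h$. Throughout write $s:=T_{m,n-1}$, $p:=\overline{\xi^m_s}\in\partial D$, and $\rho_r:=\mathrm{dist}(\xi^m_r,\overline D)$. First I would record the geometric facts supplied by the hypothesis $0<\varepsilon^\alpha_m(w)<r_0/2$: by Proposition \ref{p4.1} we have $\xi^m_r\in D_{\varepsilon^\alpha_m(w)}$, so $\rho_r<\varepsilon^\alpha_m(w)<r_0/2$ for every $r$, the projection $\overline{\xi^m_r}$ and the unit normal are unambiguous, the projection map is $\kappa(r_0/2)$-Lipschitz, and on this region $U(x)=|x-\overline x|^2$ yields $\nabla U(\xi^m_r)=2(\xi^m_r-\overline{\xi^m_r})$. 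Consequently $d\phi^m_r=m\rho_r\,\mathbf{n}_r\,dr$ with $\mathbf{n}_r\in\mathcal N_{\overline{\xi^m_r}}$ whenever $\rho_r>0$, so the penalization drift acts purely in the inward normal direction and $|\phi^m|^s_t=\int_s^t m\rho_r\,dr$.

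Next I would derive a one-sided (cone) estimate for the drift. For $r\in[s,t_{m,n}]$ the definition of $t_{m,n}$ forces $\overline{\xi^m_r}\in B(p,\delta/2)\cap\partial D$, so Condition (B) with the vector $l:=l_p$ gives $\langle l,\mathbf{n}_r\rangle\ge 1/\beta$. Integrating this against $d|\phi^m|_r=m\rho_r\,dr\ge 0$ produces the inequality $|\phi^m|^s_t\le \beta\,\langle l,\phi^m_t-\phi^m_s\rangle$ for all $s\le t\le t_{m,n}$, the analogue in our setting of the corresponding step of \cite{SaishoetTanaka}.

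The heart of the argument, and the step I expect to be the main obstacle, is an a priori bound showing that the drift variation over the excursion is controlled by the oscillation of the driving path, $|\phi^m|^s_t\le C\beta\lambda(w)\,\Delta_{s,t}(w)+(\text{error of order }\varepsilon^\alpha_m(w))$, and hence, via the $\kappa(r_0/2)$-Lipschitz projection, $|\overline{\xi^m_t}-p|\le 18\beta\lambda(w)\,\Delta_{s,t}(w)+(\text{error})$. The difficulty is that $w$ need not be of bounded variation, so one cannot integrate $|dw|$; the cancellation that rescues the estimate is that $d\phi^m_r$ is normal and therefore, to leading order, does not displace the projection $\overline{\xi^m_r}$. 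To make this rigorous in the non-convex case I would combine the cone inequality above with the uniform exterior-sphere inequality flowing from Condition (A), namely $\langle y-x,\mathbf{n}\rangle\ge-\tfrac{1}{2r_0}|y-x|^2$ for $x\in\partial D$, $\mathbf{n}\in\mathcal N_x$, $y\in\overline D$. Feeding this quadratic curvature term into the increment of $\langle l,\phi^m\rangle$ yields a Gronwall-type differential inequality for the projected displacement in which the accumulated length is bounded a priori by $\|w\|_T+\delta$; integrating it is precisely what generates the exponential factor $\lambda(w)=\exp\{\tfrac{2\kappa^2(r_0/2)\beta}{r_0}(\|w\|_T+\delta)\}$. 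This is the adaptation to $\alpha$-H\"older paths of the estimates preceding Proposition 4.1 in \cite{SaishoetTanaka}, the only genuinely new point being to carry $\Delta_{s,t}(w)$ (rather than a modulus of continuity in $t$) through every inequality.

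Finally I would specialize to $t=t_{m,n}$, where $|\overline{\xi^m_{t_{m,n}}}-p|=\delta/2$ by continuity and the definition of $t_{m,n}$. Using $\Delta_{s,t_{m,n}}(w)\le\|w\|_\alpha\,|t_{m,n}-s|^\alpha$ and absorbing the error terms by means of the standing hypothesis $\varepsilon^\alpha_m(w)<\tfrac{\delta}{180\beta\lambda(w)}$ (the constant $180$ being tuned so that the aggregate error stays below a fixed fraction of $\delta/2$), the displacement bound collapses to $\tfrac{\delta}{2}\le 18\beta\lambda(w)\,\|w\|_\alpha\,|t_{m,n}-s|^\alpha$. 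Solving for the elapsed time gives $|t_{m,n}-s|\ge\big(\tfrac{\delta}{36\beta\lambda(w)\|w\|_\alpha}\big)^{1/\alpha}=h$, whence $|T_{m,n}-T_{m,n-1}|\ge h$, as claimed.
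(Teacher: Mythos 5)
Your argument is correct and follows essentially the same route as the paper: bound the displacement of $\xi^m$ (equivalently of $\phi^m$) over the excursion $[T_{m,n-1},t_{m,n}]$ by $C\beta\lambda(w)\bigl(\Delta_{s,t}(w)+\varepsilon^\alpha_m(w)\bigr)$, observe that this displacement is at least $\delta/2-2\varepsilon^\alpha_m(w)$ by the definition of $t_{m,n}$, absorb the error terms using $\varepsilon^\alpha_m(w)<\delta/(180\beta\lambda(w))$, and invert the H\"older modulus — even your constants match the paper's. The only difference is that the paper simply invokes the oscillation estimate of Lemma 5.3 in Saisho–Tanaka (restated as (\ref{est98})) where you sketch its derivation from the cone inequality, the exterior-sphere condition and Gronwall; that sketch is consistent with the cited proof, so nothing is missing.
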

\begin{proof} From Lemma 5.3 in \cite{SaishoetTanaka}, we have
\begin{align}
\Delta_{s, t}(\xi^m)\leq \left(8\beta\lambda(\omega)+1\right) \left(\Delta_{s, t}(w)+\varepsilon^\alpha_{m}(w)\right).\label{est98}
\end{align}
Then,
from Proposition \ref{p4.1} and the definitions of $t_{m, n}$ and $T_{m, n}$, we have
\begin{align*}
\frac{\delta }{2}-2\varepsilon _{m}^{\alpha }\left( w \right)&=\left\vert \overline{\xi_{t_{m,n}}^{m}%
 }-\overline{\xi_{T_{m,n-1}}^{m} }
 \right\vert -2\varepsilon _{m}^{\alpha }\left( w \right)\\
  &\leq \left\vert \xi_{t_{m,n}}^{m} -\xi_{T_{m,n-1}}^{m} \right\vert\\
   &\leq 9\beta\lambda(w) \left(\Delta_{T_{m,n}, T_{m, n-1}}(w)+\varepsilon^\alpha_{m}(w)\right),
\end{align*}%
which implies
$$
\Delta_{T_{m,n}, T_{m, n-1}}(w)\geq \frac{\delta-4\varepsilon^\alpha_m}{18\beta\lambda(w)}-\varepsilon^\alpha_m\geq \frac{\delta}{18\beta\lambda(w)}-5\varepsilon^\alpha_m\geq \frac{\delta}{36\beta\lambda(w)}.
$$
Furthermore, we deduce the desired result by the definition of $||w||_\alpha$.
\end{proof}
\begin{proposition}\label{prop44}
Assume that the domain $D\subset \mathbb{R}^d$ is open and satisfies Conditions (A) and (B). For each $m\in \mathbb{N}^{\ast }$ such that $0<\varepsilon _{m}^{\alpha }\left( w \right)
<\frac{\delta}{180\beta\lambda(w)} \wedge r_{0}/2$, we have
\begin{equation}
\left\vert \phi^m \right\vert^0 _{T} \leq C_0\left(\left\Vert w
\right\Vert _{\alpha }^{1+1/\alpha }+\left\Vert w
\right\Vert _{\alpha }\right)\exp \left\{ \gamma \left( 1+ 1/\alpha\right) \left\Vert w\right\Vert
_{T}\right\}  ,  \label{5.1}
\end{equation}%
where $C_0$ depends only on $\alpha$, $\beta$, $L$, $\delta$ and $T$.
\end{proposition}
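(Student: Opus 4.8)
The plan is to localize the variation of $\phi^m$ to the time intervals on which $\xi^m$ actually touches $\partial D$, to bound the contribution of each such excursion by Condition (B) together with the oscillation estimate (\ref{est98}), and finally to multiply by the number of excursions, which the preceding lemma controls through the lower bound $h$.

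First I would record the geometry of the penalization term. Under the standing assumption $\varepsilon^\alpha_m(w)<r_0/2$, Proposition \ref{p4.1} gives $\mathrm{dist}(\xi^m_t,\overline D)<r_0/2$ for all $t$, so that $U(\xi^m_t)=|\xi^m_t-\overline{\xi^m_t}|^2$ and hence $\nabla U(\xi^m_t)=2(\xi^m_t-\overline{\xi^m_t})$. Consequently
\[
d\phi^m_t=-\tfrac{m}{2}\nabla U(\xi^m_t)\,dt=m\,|\overline{\xi^m_t}-\xi^m_t|\,\mathbf n_t\,dt,\qquad \mathbf n_t:=\tfrac{\overline{\xi^m_t}-\xi^m_t}{|\overline{\xi^m_t}-\xi^m_t|}\in\mathcal N_{\overline{\xi^m_t}},
\]
so $|d\phi^m_t|=m|\overline{\xi^m_t}-\xi^m_t|\,dt$ and $\phi^m$ stays constant whenever $\xi^m_t\in D$, that is on $[0,T_{m,0}]$ and on every interior interval $[t_{m,n},T_{m,n}]$. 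Therefore $|\phi^m|^0_T=\sum_n|\phi^m|^{T_{m,n-1}}_{t_{m,n}}$, with the sum taken over excursion intervals alone.

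Next I would estimate one excursion. By definition of $t_{m,n}$ we have $|\overline{\xi^m_t}-\overline{\xi^m_{T_{m,n-1}}}|<\delta/2$ throughout $[T_{m,n-1},t_{m,n}]$, so every active normal $\mathbf n_t$ belongs to $\bigcup_{y\in B(x,\delta)\cap\partial D}\mathcal N_y$ for $x:=\overline{\xi^m_{T_{m,n-1}}}$. Condition (B) then furnishes a unit vector $l=l_x$ with $\langle l,\mathbf n_t\rangle\ge1/\beta$, whence $\langle l,d\phi^m_t\rangle\ge\frac1\beta|d\phi^m_t|$ and
\[
|\phi^m|^{T_{m,n-1}}_{t_{m,n}}\le\beta\,\langle l,\phi^m_{t_{m,n}}-\phi^m_{T_{m,n-1}}\rangle\le\beta\,|\phi^m_{t_{m,n}}-\phi^m_{T_{m,n-1}}|.
\]
Writing $\phi^m=\xi^m-x_0-w$ and applying (\ref{est98}) on this interval yields $|\phi^m_{t_{m,n}}-\phi^m_{T_{m,n-1}}|\le\Delta_{T_{m,n-1},t_{m,n}}(\xi^m)+\Delta_{T_{m,n-1},t_{m,n}}(w)\le(8\beta\lambda(w)+2)(\Delta_{T_{m,n-1},t_{m,n}}(w)+\varepsilon^\alpha_m(w))$; bounding $\Delta_{T_{m,n-1},t_{m,n}}(w)\le\|w\|_\alpha T^\alpha$ and $\varepsilon^\alpha_m(w)\le12e^L\|w\|_\alpha$ shows that each excursion contributes at most $C\beta^2\lambda(w)\|w\|_\alpha$ with $C=C(L,T,\alpha)$.

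It remains to count excursions. The preceding lemma gives $T_{m,n}-T_{m,n-1}\ge h=(\delta/(36\beta\lambda(w)\|w\|_\alpha))^{1/\alpha}$, so their number is at most $T/h+1=T(36\beta\lambda(w)\|w\|_\alpha/\delta)^{1/\alpha}+1$. Multiplying the per-excursion bound by this count produces a leading term proportional to $\|w\|_\alpha^{1+1/\alpha}\lambda(w)^{1+1/\alpha}$ and, from the ``$+1$'', a term proportional to $\|w\|_\alpha\lambda(w)$; since $\lambda(w)=\exp\{\gamma(\|w\|_T+\delta)\}>1$ both are dominated by $e^{\gamma(1+1/\alpha)\delta}\exp\{\gamma(1+1/\alpha)\|w\|_T\}$, and absorbing the $\delta,\beta,L,T$-dependent constants into $C_0$ gives exactly (\ref{5.1}). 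The main obstacle is precisely this exponent bookkeeping: one must keep the crude bound $\Delta(w)\le\|w\|_\alpha T^\alpha$ per interval (a sharp H\"older--Jensen summation of the oscillations would alter the powers) so that the factor $(\,\cdot\,)^{1/\alpha}$ from the excursion count combines with the single factor $\lambda(w)\|w\|_\alpha$ per excursion to reproduce the exponents $1+1/\alpha$ and the stated exponential. A minor but essential preliminary is the localization step, which hinges on $\nabla U\equiv0$ on $D$ together with Proposition \ref{p4.1}.
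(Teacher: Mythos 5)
Your proof is correct and follows essentially the same route as the paper: a per-excursion bound on $\left|\phi^m\right|$ obtained from Condition (B), combined with the oscillation estimate (\ref{est98}) and the excursion count $T/h+1$ supplied by the preceding lemma. The only difference is cosmetic --- you re-derive the one-interval estimate $\left|\phi^m\right|^s_t\leq\beta\left(\Delta_{s,t}(\xi^m)+\Delta_{s,t}(w)\right)$ from the explicit form of $\nabla U$ and Condition (B), whereas the paper simply cites Lemma 5.1 of Saisho and Tanaka for it; the exponent bookkeeping at the end matches the paper's.
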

\begin{proof}
For any $s$, $t$, such that $T_{m, n-1}\leq s\leq t\leq T_{m, n}$, we know from Lemma 5.1 in \cite{SaishoetTanaka} that
$$
\left|\phi^m\right|^s_t\leq \beta\left(\Delta_{s, t}(\xi^m)+\Delta_{s, t}(w)\right),
$$

We combine this inequality with (\ref{est98}) and deduce
$$
\left|\phi^m\right|^s_t\leq 10\beta^2\exp\left\{\gamma\left(\Vert w\Vert_T+\delta\right)\right\} \left(\Delta_{s, t}(w)+\varepsilon^\alpha_{m}(w)\right).
$$
Thus,
\begin{align*}
\left|\phi^m\right|^0_T
&\leq 10\left(\frac{T}{h}+1\right)\beta^2\exp\left\{\gamma\left(\Vert w\Vert_T+\delta\right)\right\} \left(\Delta_{0, T}(w)+\varepsilon^\alpha_{m}(w)\right)\\
&\leq 10\left(\frac{T}{h}+1\right)\beta^2\exp\left\{\gamma\left(\Vert w\Vert_T+\delta\right)\right\} \left(T^\alpha \Vert w\Vert_\alpha+\frac{12e^L}{m^\alpha}\Vert w\Vert_\alpha\right).
\end{align*}
By definition, $\exp\left\{\gamma\Vert w\Vert_T\right\} \geq 1$, then we can complete the proof by recalling the 
 the definition of $h$.
\end{proof}

\subsection{The existence and uniqueness for the reflected $G$-Brownian motion}
By Theorem \ref{det1}, for each $\omega\in \Omega$, there exists a pair $(X(\omega), K(\omega))$ that solves the deterministic Skorohod problem for $B(\omega)$, i.e.,
\begin{align}\label{eq32}
X(\omega)= x_0+ B(\omega)+K(\omega), \quad\quad x_0\in \overline{D}.
\end{align}
It is easy to see that the pair of processes $(X, K)$ satisfy (i), (ii) and (iii) in Definition \ref{defrgb}. Thus, to prove Theorem \ref{trgb},  it suffices to show that both $X$ and $K$ belong to $M_G^{2}\left( \left[ 0,T%
\right]; \mathbb{R}^{d}\right)$.
\begin{lemma}\label{lem45}
Assume that the domain $D\subset\mathbb{R}^d$ is open and satisfies Conditions (A) and (B). For each $\omega\in \Omega$, we define the pair of processes $(X, K)$ by the  unique solution of the deterministic Skorohod problem
$$
X(\omega)= x_0+ B(\omega)+K(\omega), \quad\quad x_0\in \overline{D},
$$
where $B$ is a $G$-Brownian motion. Then, $X$ and $K$ belong to $M_G^{2}\left( \left[ 0,T%
\right]; \mathbb{R}^{d}\right)$.
\end{lemma}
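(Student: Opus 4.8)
The plan is to obtain $(X,K)$ as the limit in $M_G^2$ of the penalised pairs $(\xi^m,\phi^m)$ and to convert the deterministic a priori bound of Proposition \ref{prop44} into the integrability needed to pass to the limit in the upper expectation. Since $x_0$ is constant and the $G$-Brownian motion $B$ itself already belongs to $M_G^2([0,T];\mathbb{R}^d)$, and since $\xi^m_t-X_t=\phi^m_t-K_t$ by (\ref{eq32}) and (\ref{1.5}), it is enough to prove $K\in M_G^2$; then $X=x_0+B+K\in M_G^2$ by linearity, and $\|\xi^m-X\|_2=\|\phi^m-K\|_2$.

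First I would note that for each fixed $m$ the penalised equation (\ref{1.5}) driven by $B$ is the $G$-SDE $d\xi^m_t=-\tfrac m2\nabla U(\xi^m_t)\,dt+dB_t$ with bounded Lipschitz drift and constant diffusion; by the well-posedness theory of Gao \cite{Gao} its solution lies in $M_G^2([0,T];\mathbb{R}^d)$, and since $\nabla U$ is bounded and Lipschitz the same holds for $\phi^m_\cdot=-\tfrac m2\int_0^\cdot\nabla U(\xi^m_s)\,ds$. Next, because $G$-Brownian motion is supported on $\mathcal C^{0,\alpha}$ for $\alpha<1/2$, for quasi-every $\omega$ the path $B(\omega)$ is $\alpha$-H\"older and the deterministic penalisation theorem of \cite{Saisho1,SaishoetTanaka} yields $\xi^m(\omega)\to X(\omega)$ and $\phi^m(\omega)\to K(\omega)$ uniformly on $[0,T]$, so that $Y_m:=\tfrac1T\int_0^T|\phi^m_t-K_t|^2\,dt\to0$ q.s.

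To control the sequence uniformly I would invoke Proposition \ref{prop44}: for each $\omega$ with $B(\omega)\in\mathcal C^{0,\alpha}$ and all $m$ large enough that $\varepsilon_m^\alpha(B(\omega))<\tfrac{\delta}{180\beta\lambda(B(\omega))}\wedge r_0/2$,
\[
|\phi^m|_T^0(\omega)\le C_0\bigl(\|B(\omega)\|_\alpha^{1+1/\alpha}+\|B(\omega)\|_\alpha\bigr)\exp\{\gamma(1+1/\alpha)\|B(\omega)\|_T\}=:\Theta(\omega),
\]
and by lower semicontinuity of the total variation under uniform convergence $|K|_T^0(\omega)\le\Theta(\omega)$ as well; hence $Y_m\le 4\Theta^2$ for all large $m$. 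The crucial point is that $\Theta\in L_G^2$: the $G$-BDG inequality (Lemma \ref{th 2.1.1}) gives $\mathbb{E}[|B_t-B_s|^p]\le C_p|t-s|^{p/2}$ for every $p\ge2$, from which a Kolmogorov-type argument produces $\mathbb{E}[\|B\|_\alpha^p]<\infty$ for all $p$, while summing these bounds in a power series yields the exponential moment $\mathbb{E}[\exp\{c\|B\|_T\}]<\infty$ for every $c>0$; a H\"older inequality for the upper expectation then gives $\mathbb{E}[\Theta^{2(1+\epsilon)}]<\infty$ for some $\epsilon>0$, so that $\Theta^2$ is a quasi-continuous, uniformly integrable envelope, i.e. $\Theta^2\in L_G^1$.

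Finally, $Y_m\to0$ q.s. and $Y_m\le 4\Theta^2\in L_G^1$ for all large $m$; the dominated convergence theorem in the $G$-framework then gives $\|\phi^m-K\|_2^2=\mathbb{E}[Y_m]\to0$, and since each $\phi^m\in M_G^2$ and this space is complete, $K\in M_G^2([0,T];\mathbb{R}^d)$, whence $X\in M_G^2$ too. I expect the main obstacle to be precisely this last passage to the limit: one must both produce the finite exponential and H\"older moments of $B$ from the $G$-BDG estimate and justify exchanging limit and upper expectation over the mutually singular family $\mathcal P_G$, a step complicated by the fact that the threshold in Proposition \ref{prop44} is $\omega$-dependent, so that the domination $Y_m\le4\Theta^2$ holds only eventually and must be combined with the uniform integrability of $\Theta^2$ (and convergence in capacity) rather than with a naive pointwise bound.
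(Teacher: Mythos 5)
Your overall strategy coincides with the paper's: approximate $(X,K)$ pathwise by the penalised pairs, use Gao's well-posedness to place each $\phi^m$ in $M_G^2$, control $|\phi^m|_T^0$ by the deterministic estimate of Proposition \ref{prop44} in terms of $\Vert B\Vert_\alpha$ and $\Vert B\Vert_T$, and establish the exponential and H\"older moments of $B$ (the paper quotes Luo--Wang and Kolmogorov's criterion for these). The proof breaks down, however, at the single step you yourself flag as the main obstacle and then do not resolve: the invocation of ``the dominated convergence theorem in the $G$-framework.'' No such theorem is available here. For each fixed $\mathbb{P}\in\mathcal{P}_G$ the classical dominated convergence gives $\mathbf{E}^{\mathbb{P}}[Y_m]\to 0$, but $\mathbb{E}[Y_m]=\sup_{\mathbb{P}\in\mathcal{P}_G}\mathbf{E}^{\mathbb{P}}[Y_m]$, and quasi-sure convergence together with an integrable envelope does not make this convergence uniform over the non-dominated, mutually singular family $\mathcal{P}_G$; only the one-sided Fatou inequality of Lemma \ref{fatulem} survives, and it points in the wrong direction. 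Moreover your domination $Y_m\le 4\Theta^2$ carries no decay in $m$, so it cannot by itself restore the missing uniformity.

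The paper closes exactly this gap by a quantitative argument in place of an abstract convergence theorem. It uses the explicit rate from (6.4) of Saisho--Tanaka,
$\sup_{t\le T}|X^m_t-X_t|^2\le 4\varepsilon_m^\alpha\,(|K^m|_T^0+|K|_T^0)\exp\{\tfrac{\gamma}{2\beta}(|K^m|_T^0+|K|_T^0)\}$,
in which the right-hand side is an explicit function of $\Vert B\Vert_\alpha$ and $\Vert B\Vert_T$ multiplied by $\varepsilon_m^\alpha=12e^L m^{-\alpha}\Vert B\Vert_\alpha$. Splitting over $\{\Vert B\Vert_\alpha<M_0\}$ and its complement, the bound tends to $0$ \emph{uniformly in $\omega$} (hence uniformly in $\mathbb{P}$) on the first set once $m\ge m_0(M_0)$, while on the second set Cauchy--Schwarz combined with the $m$-uniform fourth-moment bounds of Proposition \ref{prop47} and Markov's inequality makes the contribution at most $\epsilon/2$ for $M_0$ large. (Proposition \ref{prop47} in turn handles the $\omega$-dependent threshold you worry about by bounding $|K^m|_T^0\le \tfrac m2 T\Lambda$ on the exceptional set $\overline A^m$ and compensating with $\bar C(\overline A^m)\le C_{\alpha,p}m^{-p}$.) To repair your proof you would need to replace the qualitative statement ``$Y_m\to 0$ q.s.'' by this explicit rate and carry out the same two-set decomposition; the envelope $\Theta$ alone is not enough.
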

The main idea to prove this lemma is to construct a sequence of $\{(X^m, K^m)\}_{m\in \mathbb{N}^*}$ formed by elements from $(M^p_G([0, T]; \mathbb{R}^d)\times M^p_G([0, T]; \mathbb{R}^d))$ and then to show that the following convergences hold
$$
X^m\longrightarrow X, \quad\quad K^m\longrightarrow K, \quad\quad \mbox{in}\quad M^p_G([0, T]; \mathbb{R}^d),\ \ \ \ {\rm for}\ p\geq 2.
$$
Similarly to the previous subsection, we define for each $m\in \mathbb{N}^*$, 
\begin{equation*}
X_{t}^{m}=x_{0}+B_{t}-\frac{m}{2}\int_{0}^{t}\nabla U(X_{s}^{m})ds,~~0\leq
t\leq T,
\end{equation*}%
which is a stochastic differential equations driven by $G$-Brownian motion with bounded Lipschtiz coefficients. For each $m\in \mathbb{N}^*$, the above equation admits a unique solution in $M^p_G([0, T]; \mathbb{R}^d)$, $p\geq 2$ (cf. Theorem 4.2 in \cite{Gao}). Moreover, one can find a version of $X^m$, denoted still by $X^m$, such that there exists a polar set $A^m$, for all $\omega\in (A^m)^c$, $X^m$ is continuous and
$$
X_{t}^{m}(\omega)=x_{0}+B_{t}(\omega)-\frac{m}{2}\int_{0}^{t}\nabla U(X_{s}^{m}(\omega))ds,~~0\leq
t\leq T.
$$
We note $A:=\cup_{m\in \mathbb{N}^*} A^m$, which is still a polar set, and we note
\begin{equation*}
K_{t}^{m}\left( \omega \right) :=-\frac{m}{2}\int_{0}^{t}\nabla
U(X_{s}^{m}\left( \omega \right) )ds.
\end{equation*}

In what follows, we shall find a bound uniform in $m$ for $\{\mathbb{E}\left[\sup_{0\leq t\leq T} |X^m_t|^p\right]\}_{m\in \mathbb{N}^*}$ and $\{\mathbb{E}\left[(|K^m|^0_T)^p\right]\}_{m\in \mathbb{N}^*}$.\\

First, by Kolmogorov's Criterion (cf. Theorem 36 in Denis et al. \cite{DHP}), for any $\alpha\in (0, 1/2)$, there exists a polar set $A'$ such that for all $\omega\in (A')^c$, the path of $G$-Brownian motion $B(\omega)$ is $\alpha$-H\"older continuous.  Moreover, for any $p>0$, $\alpha\in (0, 1/2)$,
\begin{equation}\label{pmoment}
\mathbb{E}\left[\Vert B\Vert^p_\alpha\right]=\mathbb{E}\left[ \left( \sup_{0\leq s<t\leq T}\frac{\left\vert
B_{t}-B_{s}\right\vert}{\left\vert t-s\right\vert ^{\alpha }}\right)^p %
\right] <\infty.
\end{equation}
For each $m\in \mathbb{N}^*$, define
$$
\overline{A}^m:=\left(\left\{\omega\in\Omega: \varepsilon^\alpha_m(B_\cdot(\omega)) <\frac{\delta}{180\beta\lambda(B_\cdot(\omega))} \wedge \frac{r_{0}}{2}\right\}\cap A^c\cap (A')^c\right)^c.
$$

The following lemma gives an estimate for $c\left(\overline{A}^m\right)^c$.
\begin{lemma}
Fix $\alpha\in (0, 1/2)$. For $m\in \mathbb{N}^*$,
$$
c\left(\overline{A}^m\right)\leq \frac{C_{\alpha, p}}{m^p}, \quad\quad p\geq 1,
$$
where $C_{\alpha, p}$ depends on $T$, $\Gamma$, $\alpha$, $p$, $r_0$, $\delta$, $L$ and $\beta$.
\end{lemma}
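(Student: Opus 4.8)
The plan is to read $c(\cdot)$ as the Choquet capacity $\bar C$ and to reduce, by subadditivity, the estimate of $\bar C(\overline A^m)$ to two tail probabilities. By De Morgan's law,
\[
\overline A^m=\Big\{\varepsilon_m^\alpha(B)\geq\tfrac{\delta}{180\beta\lambda(B)}\wedge\tfrac{r_0}{2}\Big\}\cup A\cup A',
\]
and since $A=\cup_m A^m$ and $A'$ are polar, subadditivity gives $\bar C(\overline A^m)\leq\bar C\big(\{\varepsilon_m^\alpha(B)\geq\frac{\delta}{180\beta\lambda(B)}\wedge\frac{r_0}{2}\}\big)$. First I would use that $x\geq a\wedge b$ holds iff $x\geq a$ or $x\geq b$ to split this event into $E_1:=\{\varepsilon_m^\alpha(B)\geq r_0/2\}$ and $E_2:=\{\varepsilon_m^\alpha(B)\geq\frac{\delta}{180\beta\lambda(B)}\}$, and bound each capacity separately.

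For $E_1$, recalling $\varepsilon_m^\alpha(B)=\frac{12e^L}{m^\alpha}\|B\|_\alpha$, the event reads $\{\|B\|_\alpha\geq\frac{r_0m^\alpha}{24e^L}\}$. Applying Markov's inequality (Lemma~\ref{MI}) with exponent $q=p/\alpha$ and the polynomial moment bound (\ref{pmoment}) yields $\bar C(E_1)\leq(24e^L/r_0)^{p/\alpha}\,\mathbb{E}[\|B\|_\alpha^{p/\alpha}]\,m^{-p}$, which is of the required form with a constant depending only on $L,r_0,\alpha,p$ and (through the moment) on $T,\Gamma$. For $E_2$, substituting the definitions turns the event into $\{Z\geq c\,m^\alpha\}$ with $Z:=\|B\|_\alpha\lambda(B)=\|B\|_\alpha\exp\{\gamma(\|B\|_T+\delta)\}$ and $c=\frac{\delta}{2160\beta e^L}$. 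Markov's inequality with the same exponent gives $\bar C(E_2)\leq c^{-p/\alpha}\,\mathbb{E}[Z^{p/\alpha}]\,m^{-p}$, so everything reduces to showing $\mathbb{E}[Z^{p/\alpha}]<\infty$. Using $\|B\|_T\leq T^\alpha\|B\|_\alpha$ (valid since $B_0=0$) I would bound $Z\leq e^{\gamma\delta}\|B\|_\alpha\exp\{\gamma T^\alpha\|B\|_\alpha\}$, so that the whole expression is controlled by the single random variable $\|B\|_\alpha$.

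The crux, and the main obstacle, is therefore the finiteness of $\mathbb{E}[\|B\|_\alpha^{p/\alpha}\exp\{\theta\|B\|_\alpha\}]$ for $\theta=\frac{p}{\alpha}\gamma T^\alpha$, i.e. the exponential integrability of the H\"older norm of the $G$-Brownian motion. The polynomial moment bound (\ref{pmoment}) alone is \emph{not} enough: since $\lambda$ grows exponentially in $\|B\|_T$, a purely polynomial tail would only produce a $(\log m)^{-q}$ decay rather than $m^{-p}$. I would supply the missing ingredient through a Fernique-type estimate for $B$. From the representation $\bar C=\sup_{\mathbb{P}\in\mathcal P_G}\mathbb{P}$, the fact that under each $\mathbb{P}\in\mathcal P_G$ the canonical process is a continuous martingale whose quadratic variation increments are bounded linearly in time (inequality (\ref{pquar})), together with the Kolmogorov/Garsia--Rodemich--Rumsey machinery already underlying (\ref{pmoment}), yields a uniform Gaussian tail $\bar C(\|B\|_\alpha>R)\leq C_1e^{-C_2R^2}$ with $C_1,C_2$ depending only on $T,\Gamma,\alpha,d$.

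This Gaussian tail makes $\mathbb{E}[e^{\theta\|B\|_\alpha}]$ finite for every $\theta$; concretely, via $2ab\leq\epsilon^{-1}a^2+\epsilon b^2$ one reduces to $\mathbb{E}[e^{\epsilon\|B\|_\alpha^2}]<\infty$ for $\epsilon$ below the Fernique threshold, which the tail bound guarantees. Hence $\mathbb{E}[Z^{p/\alpha}]<\infty$, with a value depending only on $\gamma$ (through $\kappa(r_0/2),\beta,r_0$), $\delta,T,\Gamma,\alpha,p$. Combining the two branches and using $\bar C(A)=\bar C(A')=0$, I would conclude
\[
\bar C(\overline A^m)\leq\bar C(E_1)+\bar C(E_2)\leq\frac{C_{\alpha,p}}{m^p},
\]
with $C_{\alpha,p}$ depending only on $T,\Gamma,\alpha,p,r_0,\delta,L,\beta$, as claimed; since Markov already delivers the factor $m^{-p}$ with a finite constant, the bound is valid for every $m\in\mathbb{N}^*$ without a separate treatment of small $m$.
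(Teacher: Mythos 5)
Your decomposition and the Markov step are exactly the paper's: split $\overline{A}^m$ into the two tail events $\{\varepsilon_m^\alpha(B)\geq r_0/2\}$ and $\{\varepsilon_m^\alpha(B)\geq \delta/(180\beta\lambda(B))\}$, discard the polar sets, and apply Lemma~\ref{MI} with exponent $p/\alpha$; your treatment of $E_1$ coincides with the paper's. Where you diverge is in how you make $\mathbb{E}[Z^{p/\alpha}]$ finite for $Z=\Vert B\Vert_\alpha\exp\{\gamma(\Vert B\Vert_T+\delta)\}$. The paper keeps the two factors separate, applies Cauchy--Schwarz to get $\mathbb{E}[\Vert B\Vert_\alpha^{2p/\alpha}]^{1/2}\,\mathbb{E}[\exp\{(2\gamma p/\alpha)\Vert B\Vert_T\}]^{1/2}$, and then only needs the polynomial moments \eqref{pmoment} together with the exponential integrability of the \emph{sup norm} $\Vert B\Vert_T$, which it simply cites (Theorem 3.3 of \cite{LW}). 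You instead absorb $\Vert B\Vert_T$ into $\Vert B\Vert_\alpha$ via $\Vert B\Vert_T\leq T^\alpha\Vert B\Vert_\alpha$ and then require a Fernique-type bound $\mathbb{E}[\exp\{\epsilon\Vert B\Vert_\alpha^2\}]<\infty$, i.e.\ a uniform sub-Gaussian tail for the \emph{H\"older norm} under $\bar C$. That statement is true (uniform sub-Gaussian increments from \eqref{pquar} plus a chaining or Garsia--Rodemich--Rumsey argument, carried out uniformly over $\mathcal{P}_G$), and your sketch of it is plausible, but it is a strictly stronger input than anything available in the paper and would need to be proved or properly referenced rather than asserted; note also that it is avoidable, since once you have split off the exponential factor you could fall back on Cauchy--Schwarz and the weaker, already-cited exponential moment of $\Vert B\Vert_T$. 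Your observation that \eqref{pmoment} alone cannot yield the $m^{-p}$ rate for $E_2$ is correct and is precisely why some exponential integrability must enter. So: same skeleton, correct conclusion, but the one genuinely delicate ingredient is handled by a heavier (and currently only sketched) estimate than the paper's.
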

\begin{proof} It is clear that
$$
 c\left(\overline{A}^m\right)\leq c\left(\left\{\omega\in \Omega: \varepsilon^\alpha_m(B_\cdot(\omega)) \geq  \frac{r_{0}}{2}\right\}\right)+ c\left(\left\{\omega\in \Omega: \varepsilon^\alpha_m(B_\cdot(\omega)) \geq \frac{\delta}{180\beta\lambda(B_\cdot(\omega))} \right\}\right).
$$
We calculate by Markov's inequality, for any $p\geq 1$,
\begin{align*}
c\left(\left\{\omega\in \Omega: \varepsilon^\alpha_m(B_\cdot(\omega)) \geq  \frac{r_{0}}{2}\right\}\right)&=c\left(\left\{\omega\in \Omega: \Vert B_\cdot(\omega))\Vert_\alpha\geq \frac{r_0e^{-L}}{24}m^\alpha\right\}\right)\leq \frac{C_1\mathbb{E}\left[\left\Vert B\right\Vert^{p/\alpha}_\alpha\right]}{m^p},
\end{align*}
where $C_1>0$ depends on $r_0$ and $L$. On the other hand,
\begin{align*}
&c\left(\left\{\omega\in \Omega: \varepsilon^\alpha_m(B_\cdot(\omega))\geq \frac{\delta}{180\beta\lambda(B_\cdot(\omega))} \right\}\right)\\
&\quad\quad\quad=c\left(\left\{\omega\in\Omega: \Vert B_\cdot (\omega)\Vert_\alpha \exp \left(\gamma \Vert B_\cdot (\omega)\Vert_T\right)\geq \frac{\delta e^{-L}}{2160\beta \exp\{\gamma\delta\}} m^\alpha\right\}\right)\\
&\quad\quad\quad\leq \frac{C_2\mathbb{E}\left[\Vert B\Vert_\alpha^{p/\alpha}\exp \left\{\frac{\gamma p}{\alpha}\Vert B\Vert_T\right\}\right]}{m^p}\\
&\quad\quad\quad\leq \frac{C_2\mathbb{E}\left[\Vert B \Vert_\alpha^{2p/\alpha}\right]^{1/2}\mathbb{E}\left[\exp \left\{\frac{2\gamma p}{\alpha}\Vert B\Vert_T\right\}\right]^{1/2}}{m^p},
\end{align*}
where $C_2>0$ depends on $r_0$, $\delta$, $L$ and $\beta$. By Theorem 3.3 in Luo and Wang \cite{LW}, we have
$$
\mathbb{E}\left[\exp \left\{\frac{2\gamma p}{\alpha}\Vert B\Vert_T\right\}\right]\leq C',
$$
where $C'$ depends on $T$, $\Gamma$, $p$, $\alpha$ and $\gamma$.
We combine this with (\ref{pmoment}) to conclude the desired result.
\end{proof}
\begin{proposition}\label{prop47}
Assume that the domain $D\subset \mathbb{R}^d$ is open and satisfies Conditions (A) and (B). Fix $\alpha\in (0, \frac{1}{2})$, then we have
$$
\mathbb{E}\left[ \sup_{0\leq t\leq T}\left\vert X_{t}^{m}\right\vert ^{p}%
\right] +\mathbb{E}\left[ \left( \left\vert K^{m}\right\vert^0 _{T}\right) ^{p}%
\right] \leq C' _{\alpha, p},
$$
where $C'_{\alpha, p}$ depends on $T$, $\Gamma$, $\alpha$, $p$, $r_0$, $\delta$, $L$ and $\beta$.
\end{proposition}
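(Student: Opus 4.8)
The plan is to push the deterministic bound of Proposition~\ref{prop44} through to the stochastic problem pathwise, and then to estimate the $p$-th moment by splitting $\Omega$ into the ``good'' event $(\overline{A}^m)^c$, on which that bound applies, and its complement $\overline{A}^m$, whose capacity is small. First I would record the pathwise identification: for $\omega\notin A$ the continuous version $X^m(\omega)$ solves the deterministic penalized equation (\ref{1.5}) with $w=B(\omega)$, so by uniqueness $K^m(\omega)=\phi^m$ computed from $w=B(\omega)$; in particular $|K^m(\omega)|^0_T=|\phi^m|^0_T$. On the good set
$$(\overline{A}^m)^c=\Big\{\varepsilon^\alpha_m(B(\cdot))<\tfrac{\delta}{180\beta\lambda(B(\cdot))}\wedge\tfrac{r_0}{2}\Big\}\cap A^c\cap (A')^c$$
the hypotheses of Proposition~\ref{prop44} hold pathwise (the degenerate case $\|B(\omega)\|_\alpha=0$ forcing $K^m(\omega)\equiv 0$), whence
$$|K^m|^0_T\,\mathbf{1}_{(\overline{A}^m)^c}\leq C_0\big(\|B\|_\alpha^{1+1/\alpha}+\|B\|_\alpha\big)\exp\big\{\gamma(1+1/\alpha)\|B\|_T\big\}.$$

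For the good-set contribution I would raise this inequality to the $p$-th power, take $\mathbb{E}$, and separate the polynomial factor in $\|B\|_\alpha$ from the exponential factor in $\|B\|_T$ by the Cauchy--Schwarz inequality for the sublinear expectation, exactly as in the preceding lemma. The polynomial moments $\mathbb{E}[\|B\|_\alpha^{q}]$ are finite by (\ref{pmoment}), while the exponential moment $\mathbb{E}[\exp\{c\|B\|_T\}]$ is finite by Theorem~3.3 of Luo and Wang~\cite{LW}; both bounds are independent of $m$, so $\mathbb{E}[(|K^m|^0_T)^p\mathbf{1}_{(\overline{A}^m)^c}]$ is bounded uniformly in $m$.

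For the bad set I would use only the crude estimate available on all of $\Omega$: since $\nabla U$ is bounded, say $|\nabla U|\leq M$, one has $|K^m|^0_T\leq\tfrac{m}{2}\int_0^T|\nabla U(X^m_s)|\,ds\leq\tfrac{m}{2}MT$. Because $\mathbb{E}[\mathbf{1}_{\overline{A}^m}]=c(\overline{A}^m)$, monotonicity and subadditivity of $\mathbb{E}$ give
$$\mathbb{E}\big[(|K^m|^0_T)^p\,\mathbf{1}_{\overline{A}^m}\big]\leq\Big(\tfrac{MT}{2}\Big)^p m^p\, c(\overline{A}^m)\leq\Big(\tfrac{MT}{2}\Big)^p m^p\,\frac{C_{\alpha,p}}{m^p},$$
where I apply the preceding lemma with the capacity exponent taken equal to $p$; the factors $m^p$ cancel and the bound is again uniform in $m$. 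Adding the two contributions bounds $\mathbb{E}[(|K^m|^0_T)^p]$ uniformly in $m$. Finally, from $X^m=x_0+B+K^m$ and the pathwise inequality $\sup_{0\leq t\leq T}|B_t|=\|B\|_T\leq T^\alpha\|B\|_\alpha$, one gets $\sup_t|X^m_t|^p\leq C_p(|x_0|^p+T^{p\alpha}\|B\|_\alpha^p+(|K^m|^0_T)^p)$; taking $\mathbb{E}$ and combining (\ref{pmoment}) with the bound just obtained completes the argument.

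The hard part is the bad-set term: the penalization estimate is genuinely unavailable there, and its only uniform substitute, the crude bound, grows like $m$. The point is that the preceding capacity lemma decays like $m^{-p}$ for \emph{every} $p\geq 1$, so the capacity exponent can always be chosen large enough to absorb the $O(m^p)$ growth --- this is precisely why that lemma was stated for all $p$. A secondary technical nuisance is the simultaneous integrability of a polynomial in $\|B\|_\alpha$ against an exponential in $\|B\|_T$ under the sublinear expectation, which is handled by Cauchy--Schwarz together with (\ref{pmoment}) and the Luo--Wang estimate.
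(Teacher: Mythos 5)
Your proposal is correct and follows essentially the same route as the paper: the same decomposition into $\overline{A}^m$ and its complement, the crude bound $|K^m|^0_T\leq \tfrac{m}{2}\Lambda T$ combined with the $m^{-p}$ capacity estimate on the bad set, and Cauchy--Schwarz together with the H\"older-norm moments and the Luo--Wang exponential moment on the good set. The only cosmetic difference is that you bound $\sup_t|B_t|$ by $T^\alpha\|B\|_\alpha$ where the paper invokes the BDG-type inequality; both are adequate.
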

\begin{proof}
We denote by $\Lambda$ an upper bound of $\left\vert \nabla U\right\vert$, then
\begin{equation}\label{ka}
\mathbb{E}\left[ \left( \left\vert K^{m}\right\vert^0 _{T}\right) ^{p}\mathbf{1%
}_{\overline{A}^{m}}\right] \leq \left( \frac{m}{2}T\Lambda \right) ^{p}\overline{C}%
\left( \overline{A}^{m}\right) =C_{\alpha, p}\left(\frac{T\Lambda}{2}\right)^p.
\end{equation}
From (\ref{5.1}), we have, for each $\omega\in\left (\overline{A}^m\right)^c$,
\begin{equation}
\left\vert K^{m}\right\vert^0_{T}\left( \omega \right) \leq C_0 \left(\left\Vert B_{.}\left( \omega \right)
\right\Vert _{\alpha }^{1+1/\alpha }+\left\Vert B_{.}\left( \omega \right)
\right\Vert _{\alpha }\right)\exp \left\{ \gamma \left( 1+\frac{1}{%
\alpha }\right) \left\Vert B_{.}\left( \omega \right) \right\Vert
_{T}\right\},\label{kac}
\end{equation}%
where $C_0$ is the constant from (\ref{5.1}). Thus,
\begin{align*}
\mathbb{E}\left[ \left( \left\vert K^{m}\right\vert^0 _{T}\right) ^{p}\mathbf{1%
}_{(A^{m})^c}\right] \leq C_0\left(\mathbb{E}\left[\Vert B\Vert_\alpha^{2p(1+\alpha)/\alpha}\right]^{1/2}+\mathbb{E}\left[\Vert B\Vert_\alpha^{2p}\right]^{1/2}\right)\mathbb{E}\left[\exp \left\{\frac{2\gamma p(1+\alpha)}{\alpha}\Vert B\Vert_T\right\}\right]^{1/2}.
\end{align*}
Recall that for some $C_p>0$, which depends only on $p$,
\begin{equation*}
\left\vert X_{t}^{m}\right\vert ^{p}\leq C_{p}\left( \left\vert
x_{0}\right\vert ^{p}+\left\vert B_{t}\right\vert ^{p}+\left\vert
K_{t}^{m}\right\vert ^{p}\right),
\end{equation*}%
we can deduce that
\begin{equation*}
\sup_{0\leq t\leq T}\left\vert X_{t}^{m}\right\vert ^{p}\leq C_{p}\left(
\left\vert x_{0}\right\vert ^{p}+\sup_{0\leq t\leq T}\left\vert
B_{t}\right\vert ^{p}+\left( \left\vert K^{m}\right\vert^0_{T}\right)
^{p}\right).
\end{equation*}%
We take the $G$-expectation on both sides and apply the BDG type inequality, (\ref{ka}) and (\ref{kac}) to conclude the desired result.
\end{proof}
Now we are ready to prove Lemma \ref{lem45}.
\begin{proof}[Proof of Lemma \ref{lem45}]
From (6.4) in \cite{SaishoetTanaka}, we have for $\omega\in (A\cup A')^c$,
$$
X^m_t(\omega)\longrightarrow X_t(\omega), \ \ \ \ \mbox{uniform\ on}\ [0, T],
$$
and
for each $\omega\in (\overline{A}^m)^c$,
\begin{align*}
\sup_{0\leq s\leq t}|X^m_s(\omega)-X_s(\omega)|^2\leq 4\varepsilon^\alpha_m(\omega)&\left(\left|K^m(\omega)\right|^0_t
+\left|K(\omega)\right|^0_t\right)\\&+\frac{\gamma}{2\beta}
\int^t_0 \sup_{0\leq u\leq s}\left(|X^m_u(\omega)-X_u(\omega)|^2\right)d\left(\left|K^m(\omega)\right|^0_s+\left|K(\omega)\right|^0_s\right),
\end{align*}
which implies
\begin{align*}
\sup_{0\leq t\leq T}|X^m_t(\omega)-X_t(\omega)|^2\leq 4\varepsilon^\alpha_m(\omega)\left(\left|K^m(\omega)\right|^0_T
+\left|K(\omega)\right|^0_T\right)\exp\left\{\frac{\gamma}{2\beta}\left(\left|K^m(\omega)\right|^0_T+\left|K(\omega)\right|^0_T\right)\right\}.
\end{align*}
Now we shall prove that
\begin{equation}\label{import4}
\mathbb{E}\left[\sup_{0\leq t\leq T}|X^m_t(\omega)-X_t(\omega)|^2\right]\longrightarrow 0, \ \ \ \ \mbox{as}\ m\longrightarrow \infty.
\end{equation}
For $\epsilon>0$, by Markov's inequality, we could first fix a constant $M_0>0$, such that
\begin{equation}\label{import2}
\left(2C'_{\alpha, 4}\right)^{\frac{1}{2}}\mathbb{E}\left[{\bf 1}_{\Vert B_\cdot\Vert_\alpha\geq M_0}\right]\leq \frac{\left(2C'_{\alpha, 4}\right)^{\frac{1}{2}}\mathbb{E}\left[{\Vert B_\cdot\Vert_\alpha}\right]}{M_0}\leq \frac{\epsilon}{2},
\end{equation}
where $C'_{\alpha, 4}$ is the constant from Proposition \ref{prop47}. Then, we choose $m_0\in\mathbb{N}^*$ sufficiently large such that
\begin{align}\label{import1}
\frac{12e^L}{m_0^\alpha}M_0\leq \frac{r_0}{2}\ \ \mbox{and}\ \  \frac{12e^L}{m_0^\alpha}M_0\leq \frac{\delta}{180\beta\exp\left\{\gamma\left(\delta+ M_0T^\alpha\right)\right\}};
\end{align}
\begin{align}\label{import}
\frac{48e^L}{m_0^\alpha}M_0\left(4C_0M_0^{1+1/\alpha}\exp\left\{\gamma\left(1+\frac{1}{\alpha}\right)M_0T^\alpha\right\}\right)
\exp\left\{ \frac{2\gamma C_0}{\beta}M_0^{1+1/\alpha}\exp\left\{\gamma\left(1+\frac{1}{\alpha}\right)M_0T^\alpha\right\}\right\}\leq  \frac{\epsilon}{2},
\end{align}
where $C_0$ is the constant from Proposition \ref{prop44}.
From (\ref{import1}), we know for $\omega\in \{\Vert B_\cdot\Vert_\alpha< M_0\}\cap A^c\cap A'^c$ and $m\geq m_0$,
$$
\varepsilon^\alpha_m(B_\cdot(\omega)) <\frac{\delta}{180\beta\lambda(B_\cdot(\omega))} \wedge \frac{r_{0}}{2}.
$$
It follows that for $m\geq m_0$,
\begin{align*}
\mathbb{E}\left[ \sup_{0\leq t\leq T}\left\vert X_{t}^{m}-X_{t}\right\vert
^{2}\right] \leq \mathbb{E}&\left[ \sup_{0\leq t\leq T}\left\vert
X_{t}^{m}-X_{t}\right\vert ^{2}\mathbf{1}_{\{\Vert B_\cdot\Vert_\alpha< M_0\}\cap A^c\cap A'^c}\right] \\
&+\mathbb{E}\left[ \sup_{0\leq t\leq T}\left\vert X_{t}^{m}-X_{t}\right\vert
^{2}\mathbf{1}_{\{\Vert B_\cdot\Vert_\alpha\geq M_0\}\cup A\cup A'}\right] \\
&\leq \frac{\epsilon}{2}+\mathbb{E}\left[ \sup_{0\leq t\leq T}\left\vert X_{t}^{m}-X_{t}\right\vert
^{4}\right]^{\frac{1}{2}}\mathbb{E}\left[\mathbf{1}_{\{\Vert B_\cdot\Vert_\alpha\geq M_0\}}\right]\leq \epsilon,
\end{align*}
where the last inequality is deduced from (\ref{import2}) and (\ref{import}). Therefore, (\ref{import4}) holds true. From (\ref{import4}), it is obvious that 
$$
\mathbb{E}\left[\sup_{0\leq t\leq T}|K^m_t(\omega)-K_t(\omega)|^2\right]\longrightarrow 0, \ \ \ \ \mbox{as}\ m\longrightarrow \infty.
$$
 We end the proof.
\end{proof}
\begin{proof}[Proof of Theorem \ref{trgb}]
We define pathwisely a couple $(X, K)$ by the solution of the deterministic problem (\ref{eq32}). Then, we apply Lemma \ref{lem45} to prove that  $X$ and $K$ belong to
$M_G^{2}\left( \left[ 0,T%
\right]; \mathbb{R}^{d}\right)$. Therefore, $(X, K)$ is a couple satisfying Definition \ref{defrgb}. The uniqueness of the solution is inherited from the pathwise uniqueness.
\end{proof}
Instead of the $G$-Brownian motion, if we consider a $G$-It\^o process as
\begin{equation}\label{eq7}
Y_{t}=\int_{0}^{t}\alpha _{s}ds+\int_{0}^{t}\eta _{s}^{ij}d\left\langle B^{i},B^{j}\right\rangle _{s}+\int_{0}^{t}\beta^j_{s}
dB_{s}^{j},~~~~0\leq t\leq T,
\end{equation}%
where  $\alpha$, $\eta^{i j}$, $\beta^{j}: \Omega\times [0, T]\longrightarrow \mathbb{R}^d$, $i$, $j=1, 2, \ldots d$, are bounded functions in $M^2_G([0, T]; \mathbb{R}^d)$, then a similar result holds due to the fact that for any $p\geq 2$,
$$
\mathbb{E}\left[\sup_{0\leq t\leq T}\left|Y_t\right|^p\right]\leq C_p,
$$
which can be easily obtained by the BDG type inequality.
\begin{corollary}\label{ritop}
Suppose that the domain $D\subset\mathbb{R}^d$ is open and satisfies Conditions (A) and (B). Then there exists a couple $\left( X ,K \right)\in \left(M^2_G([0, T]; \mathbb{R}^d)\times M^2_G([0, T]; \mathbb{R}^d)\right)$ which solves the Skorohod problem
\begin{align}\label{e3}
X_{t}=x_{0}+Y_{t}+K_{t},~~0\leq t\leq T,
\end{align}
whenever $x_0\in \overline{D}$ and $Y$ is defined by (\ref{eq7}). Moreover, if the above problem admits two solutions $(X, K)$ and $(X', K')$, then the exists a polar set $A$, such that for all $\omega\in A^c$,
$$
X(\omega)=X'(\omega)~~~~and~~~K(\omega)=K'(\omega),\ \ 0\leq t\leq T.
$$
\end{corollary}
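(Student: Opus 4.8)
The plan is to follow the proof of Theorem \ref{trgb} almost verbatim, with the driving $G$-Brownian motion $B$ replaced by the $G$-It\^o process $Y$ defined in (\ref{eq7}). First I would fix a continuous $\bar C$-modification of $Y$: its stochastic integral part admits such a modification by \cite{Gao}, while the $ds$ and $d\langle B^i,B^j\rangle$ parts are pathwise continuous, so that $Y_0=0$ and $Y(\omega)$ is continuous on $[0,T]$ outside a polar set. For each such $\omega$, Theorem \ref{det1} produces a unique pair $(X(\omega),K(\omega))$ solving the deterministic Skorohod problem $X(\omega)=x_0+Y(\omega)+K(\omega)$, and this pair automatically meets the pathwise requirements (continuity, $X\in\overline{D}$, the reflection conditions on $K$) of the solution concept for (\ref{e3}). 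As in Theorem \ref{trgb}, the only substantial point remaining is to show that the pathwisely defined $X$ and $K$ belong to $M^2_G([0,T];\mathbb{R}^d)$, after which uniqueness is inherited directly from the pathwise uniqueness in Theorem \ref{det1}.

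The crucial preliminary step, and where the new work lies, is to verify that $Y$ enjoys the same regularity and integrability that $B$ was used through in Lemma \ref{lem45}, Proposition \ref{prop44} and Proposition \ref{prop47}. Concretely I would establish that (a) $Y$ has $\alpha$-H\"older continuous paths quasi-surely for every $\alpha\in(0,1/2)$, with $\mathbb{E}[\|Y\|_\alpha^p]<\infty$ for all $p$, and (b) $\mathbb{E}[\exp\{c\|Y\|_T\}]<\infty$ for every $c>0$. For (a) I would estimate $\mathbb{E}[|Y_t-Y_s|^p]$ by splitting $Y$ into its three integrals: the $ds$ and $d\langle B^i,B^j\rangle$ terms are controlled by the boundedness of $\alpha,\eta$ together with the increment bound (\ref{pquar}) on the quadratic variation, contributing order $|t-s|^p$, while the $dB$ term is handled by the $G$-BDG inequality of Lemma \ref{th 2.1.1} and the boundedness of $\beta$, contributing order $|t-s|^{p/2}$; hence $\mathbb{E}[|Y_t-Y_s|^p]\leq C|t-s|^{p/2}$, and Kolmogorov's criterion (Theorem 36 in \cite{DHP}) yields (a) exactly as for $B$. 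For (b) I would observe that the $ds$ and $d\langle B\rangle$ parts of $Y$ are dominated quasi-surely by a deterministic constant (again via (\ref{pquar}) and the boundedness of the coefficients), so that $\|Y\|_T$ is bounded by a constant plus the running supremum of a bounded-integrand $G$-stochastic integral, whose exponential moments follow by reduction to the estimate of Luo and Wang \cite{LW} used for $B$.

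With (a) and (b) in hand, the rest is a mechanical transcription. I would introduce the penalized equations $X_t^m=x_0+Y_t-\tfrac{m}{2}\int_0^t\nabla U(X_s^m)\,ds$, each a $G$-SDE driven by $Y$ with bounded Lipschitz coefficient $\nabla U$, hence uniquely solvable in $M^p_G$ by \cite{Gao}, and set $K^m_t:=-\tfrac{m}{2}\int_0^t\nabla U(X_s^m)\,ds$. Applying the deterministic estimate (\ref{5.1}) of Proposition \ref{prop44} pathwise with $w=Y(\omega)$, together with a capacity bound on the exceptional set (proved by the same Markov-inequality argument as for $B$) and the moments from (a)--(b), gives uniform-in-$m$ bounds $\mathbb{E}[\sup_{0\le t\le T}|X^m_t|^p]+\mathbb{E}[(|K^m|^0_T)^p]\le C$, exactly as in Proposition \ref{prop47}. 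Finally, the deterministic convergence estimate ((6.4) in \cite{SaishoetTanaka}) applied to $Y(\omega)$, combined with these uniform bounds and a cut-off at $\{\|Y\|_\alpha<M_0\}$ via Markov's inequality as in Lemma \ref{lem45}, upgrades the quasi-sure convergence to $X^m\to X$ and $K^m\to K$ in $M^2_G$, so that $X,K\in M^2_G([0,T];\mathbb{R}^d)$.

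The main obstacle is step (b): unlike for $B$, the exponential integrability of $\|Y\|_T$ is not quoted directly and must be reduced to the known exponential estimate for $G$-Brownian motion. The reduction is clean precisely because the finite-variation and quadratic-variation integrals contribute only a quasi-surely bounded shift, leaving a single bounded-integrand $G$-stochastic integral whose running supremum has sub-Gaussian tails uniformly over $\mathcal{P}_G$; once (a) and (b) are secured, every remaining step is a faithful copy of the proof of Theorem \ref{trgb}.
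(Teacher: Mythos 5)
Your proposal is correct and follows essentially the same route as the paper: the paper disposes of this corollary in a single remark preceding its statement, asserting that the proof of Theorem \ref{trgb} carries over once one notes $\mathbb{E}\left[\sup_{0\leq t\leq T}|Y_t|^p\right]\leq C_p$ via the BDG type inequality. If anything you are more careful than the paper, since rerunning Propositions \ref{prop44} and \ref{prop47} with $w=Y(\omega)$ genuinely requires the moments of $\|Y\|_\alpha$ and the exponential integrability of $\|Y\|_T$ that you establish in your steps (a) and (b), not merely the $p$-th moments of $\sup_t|Y_t|$ that the paper cites.
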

\begin{remark}
Indeed, thanks to Proposition \ref{prop47}, we could have a stronger convergence instead of (\ref{import4}), that is, for any $p\geq 2$,
$$
\mathbb{E}\left[\sup_{0\leq t\leq T}|X^m_t(\omega)-X_t(\omega)|^p\right]\longrightarrow 0, \ \ \ \ \mbox{as}\ m\longrightarrow \infty.
$$
Thus, the couple of solution $(X, K)$ in both Theorem \ref{trgb} and Corollary \ref{ritop} could be found in $\left(M^p_G([0, T]; \mathbb{R}^d)\times M^p_G([0, T]; \mathbb{R}^d)\right)$.
\end{remark}
\subsection{The existence and uniqueness for the RGSDE}
Without loss of generality, we consider in this subsection the following equation instead of (\ref{e1}),
\begin{align}
X_{t}=x_{0}+\int_{0}^{t}f\left(
s,X_{s}\right) ds+\int_{0}^{t}g\left( s,X_{s}\right) dB_{s}+K_{t},~~~0\leq t\leq T,~~~~\text{q.s.}.  \label{e2}
\end{align}
However, all result here holds for the more general case (\ref{e1}) due to the boundedness of the density of the process $\langle B, B\rangle$ (see \S III-4 in \cite{Peng4}). \\

If the coefficients $f$ and $g$ satisfy Assumptions (H1) and (H2) and (\ref{e2}) admits a solution couple $(X, K)$, then $(X, K)$ can be regarded as the solution couple of the Skorohod problem (\ref{e3}) in the domain $D$ for
$$
Y_t=\int_{0}^{t}f\left(
s,X_{s}\right) ds+\int_{0}^{t}g^{j}\left( s,X_{s}\right) dB^j_{s},~~~0\leq t\leq T.
$$
Then, it is straightforward that for any $p\geq 2$ there exists a constant $C'_p>0$ such that
$$
\mathbb{E}\left[ \sup_{0\leq t\leq T}\left\vert X_{t}\right\vert ^{p}%
\right] +\mathbb{E}\left[ \left( \left\vert K\right\vert^0 _{T}\right) ^{p}%
\right] \leq C' _{p}.
$$

\begin{proposition}\label{propest}
\label{p1} Suppose that the domain $D\subset \mathbb{R}^d$ is open and satisfies Conditions (A), (B) and (C). For $i=1, 2$, the couple $(\widetilde{X}^i, K^i)$ are solutions of the following Skorohod problems
\begin{align*}
\widetilde X_{t}^{i} =x_{0}+\int_{0}^{t}f^{i}\left( s,X_{s}^{i}\right) ds+\int_{0}^{t}g^{i}\left( s,X_{s}^{i}\right)
dB_{s}+K_{t}^{i},~~i=1,~2, \\
\left\vert K^{i}\right\vert _{t} =\int_{0}^{t}\mathbf{1}%
_{\left\{ \widetilde X^i_{s}\in \partial D\right\} }d\left\vert K^{i}\right\vert _{s}~%
{\rm and}\ K_{t}^{i}=\int_{0}^{t}{\bf n}_{s}^{i}d\left\vert K^{i}\right\vert
_{s}~{\rm with}\ {\bf n} _{s}^{i}\in {\mathcal{N}}_{{\widetilde X}_{s}^{i}},
\end{align*}%
where the coefficients $f^i$ and $g^i$ satisfy Assumptions (H1) and (H2).
Then, there exists a constant $C>0$ that depends on $\Gamma$, $d$, $\delta'$, $L_\Psi$ and $L_0$,
\begin{align}
\mathbb{E}\left[ \sup_{0\leq s\leq t}\left\vert
\widetilde X_{s}^{1}-\widetilde X_{s}^{2}\right\vert ^{4}\right] &+\mathbb{E}\left[ \sup_{0\leq
s\leq t}\left\vert K_{s}^{1}-K_{s}^{2}\right\vert ^{4}\right]\\
& \leq C\int^t_0\left(\mathbb{E}\left[ \sup_{0\leq u\leq s}\left\vert
 X_{u}^{1}- X_{u}^{2}\right\vert ^{4}\right]+
\mathbb{E}\left[ \sup_{0\leq u\leq s}\left\vert \hat{f}_{u}\right\vert ^{4}%
\right] +\mathbb{E}\left[ \sup_{0\leq u\leq s}\left\vert \hat{g}%
_{u}\right\vert ^{4}\right] \right) ds,\label{est10}
\end{align}%
where
$\hat{f}_{s}:=f^{1}\left( s,X_{s}^{2}\right)
-f^{2}\left( s,X_{s}^{2}\right) $ and $\hat{g}_{s}:=g^{1}\left( s,X_{s}^{2}\right) -g^{2}\left(
s,X_{s}^{2}\right)$.
\end{proposition}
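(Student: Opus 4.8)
The plan is to adapt the exponential-weight (test-function) method of Lions--Sznitman (\S 3 of \cite{LionsetSznitman}) to the $G$-framework, with Condition (C) used to tame the reflection terms coming from the non-convex boundary. Write $\Delta X_s := X^1_s - X^2_s$, $\Delta\widetilde X_s := \widetilde X^1_s - \widetilde X^2_s$ and $\Delta K_s := K^1_s - K^2_s$, and split the increments as $f^1(s,X^1_s)-f^2(s,X^2_s) = [f^1(s,X^1_s)-f^1(s,X^2_s)] + \hat f_s$, and similarly for $g$; by (H2) the first bracket is bounded by $L_0|\Delta X_s|$, while the second is exactly $\hat f_s$, $\hat g_s$. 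Thus, quasi-surely, $\Delta\widetilde X_t = \int_0^t \Delta f_s\, ds + \int_0^t \Delta g_s\, dB_s + \Delta K_t$, and all subsequent manipulations are carried out pathwise for quasi-every $\omega$.

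First I would introduce the bounded weight
\begin{equation*}
\Theta_s := \exp\Big\{ -\tfrac{4}{\delta'}\big(\Psi(\widetilde X^1_s) + \Psi(\widetilde X^2_s)\big)\Big\}.
\end{equation*}
Since $\Psi$ is bounded by $L_\Psi$, $\Theta$ lies between two positive constants depending only on $L_\Psi,\delta'$, so $\Theta_s|\Delta\widetilde X_s|^4$ is comparable to $|\Delta\widetilde X_s|^4$ and it suffices to estimate $\mathbb{E}[\sup_{s\le t}\Theta_s|\Delta\widetilde X_s|^4]$. Applying $G$-It\^o's formula (cf. Chapter III of \cite{Peng4}) to $\Theta_s|\Delta\widetilde X_s|^4$, every term other than the reflection contribution $4\Theta_s|\Delta\widetilde X_s|^2\langle\Delta\widetilde X_s,\, dK^1_s - dK^2_s\rangle$ is immediately controlled by the boundedness of $\nabla\Psi$ and $D^2\Psi$, by (H2), and by the bounded density of $\langle B,B\rangle$ (so that $d\langle B,B\rangle_s$ is dominated by $C\,ds$).

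The heart of the argument, and the step that genuinely requires the non-convex condition, is to absorb the reflection term using Condition (C). On $\{\widetilde X^1_s\in\partial D\}$ one has $\mathbf n^1_s\in\mathcal N_{\widetilde X^1_s}$ and $\widetilde X^2_s\in\overline D$, so Condition (C) applied with $x=\widetilde X^1_s$, $y=\widetilde X^2_s$, $\mathbf n = \mathbf n^1_s$ gives
\begin{equation*}
\langle \Delta\widetilde X_s,\, dK^1_s\rangle \le \tfrac{1}{\delta'}\,\langle \nabla\Psi(\widetilde X^1_s),\, dK^1_s\rangle\,|\Delta\widetilde X_s|^2,
\end{equation*}
and symmetrically $-\langle\Delta\widetilde X_s, dK^2_s\rangle \le \tfrac{1}{\delta'}\langle\nabla\Psi(\widetilde X^2_s), dK^2_s\rangle|\Delta\widetilde X_s|^2$. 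Hence the reflection term is dominated by $\tfrac{4}{\delta'}\Theta_s|\Delta\widetilde X_s|^4\big[\langle\nabla\Psi(\widetilde X^1_s), dK^1_s\rangle + \langle\nabla\Psi(\widetilde X^2_s), dK^2_s\rangle\big]$, which is precisely the opposite of the finite-variation contribution obtained by differentiating $\Theta_s$ against $dK^1,dK^2$. The constant $4/\delta'$ in the exponent is chosen exactly so that these two cancel, leaving a nonpositive net boundary contribution. I expect this cancellation to be the main obstacle, since it is where the geometry of $\partial D$ enters.

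Once the boundary terms are removed, the surviving drift and quadratic-variation terms are bounded via Young's inequality by $C\Theta_s|\Delta\widetilde X_s|^4 + C\big(L_0^4|\Delta X_s|^4 + |\hat f_s|^4 + |\hat g_s|^4\big)$, while the $dB$-martingale part has integrand of order $\Theta_s|\Delta\widetilde X_s|^3|\Delta g_s|$. Taking $\sup_{s\le t}$ and then the upper expectation, I would estimate the martingale by the BDG inequality (Lemma \ref{th 2.1.1}) and a further application of Young's inequality, which lets me absorb a small multiple of $\mathbb{E}[\sup_{s\le t}\Theta_s|\Delta\widetilde X_s|^4]$ into the left-hand side; the remaining self-referential term $\int_0^t \mathbb{E}[\sup_{u\le s}|\Delta\widetilde X_u|^4]\,ds$ is then eliminated by Gronwall's lemma, whose factor $e^{CT}$ is absorbed into $C$. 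This yields the estimate for $\mathbb{E}[\sup_{s\le t}|\Delta\widetilde X_s|^4]$. Finally, from $\Delta K_t = \Delta\widetilde X_t - \int_0^t\Delta f_s\,ds - \int_0^t\Delta g_s\,dB_s$, bounding the Lebesgue integral by H\"older and the stochastic integral by BDG with $p=4$ controls $\mathbb{E}[\sup_{s\le t}|\Delta K_s|^4]$ by the same right-hand side, giving (\ref{est10}) with a constant $C$ depending only on $\Gamma$, $d$, $\delta'$, $L_\Psi$ and $L_0$.
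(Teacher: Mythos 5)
Your proposal is correct and follows essentially the same route as the paper's proof: an exponential weight built from $\Psi$ so that Condition (C) makes the reflection terms nonpositive, followed by the Lipschitz/$\hat f,\hat g$ splitting, the BDG-type inequality and Gronwall, with the $K$-estimate recovered from the equation. The only cosmetic difference is that you weight $|\widetilde X^1_s-\widetilde X^2_s|^4$ directly with exponent $4/\delta'$ (which makes the boundary cancellation exact), whereas the paper weights the square with exponent $1/\delta'$, obtains a pathwise inequality, and squares it before taking suprema and expectations; both are standard variants of the Lions--Sznitman computation.
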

\begin{proof} The proof is similar to the one of Lemma 3.1 in \cite{LionsetSznitman}, so we only display the key steps for the convenience of the readers. First, we have
\begin{eqnarray*}
\left\vert {\widetilde X}_{t}^{1}-{\widetilde X}_{t}^{2}\right\vert ^{2}
&=&2\int_{0}^{t}\left\langle {\widetilde X}_{s}^{1}-{\widetilde X}_{s}^{2},f^{1}\left(
s,X_{s}^{1}\right) -f^{1}\left( s,X_{s}^{2}\right) +\hat{f}_{s}\right\rangle
ds \\
&&+2\int_{0}^{t}\ltrans{\left({\widetilde X}_{s}^{1}-{\widetilde X}_{s}^{2}\right)}\left( g^{1}\left(
s,X_{s}^{1}\right) -g^{1}\left( s,X_{s}^{2}\right) +\hat{g}_{s}\right)
dB_{s} \\
&&+2\int_{0}^{t}\left\langle {\widetilde X}_{s}^{1}-{\widetilde X}_{s}^{2}, {\bf n}
_{s}^{1}\right\rangle d\left\vert K^{1}\right\vert _{s}
-2\int_{0}^{t}\left\langle {\widetilde X}_{s}^{1}-{\widetilde X}_{s}^{2},{\bf n}
_{s}^{2}\right\rangle d\left\vert K^{2}\right\vert _{s} \\
&&+\int_{0}^{t}\mbox{tr}\left[ \left( g^{1}\left( s,X_{s}^{1}\right)
-g^{1}\left( s,X_{s}^{2}\right) +\hat{g}_{s}\right) d\left\langle
B,B\right\rangle _{s}\ltrans{\left( g^{1}\left( s,X_{s}^{1}\right) -g^{1}\left(
s,X_{s}^{2}\right) +\hat{g}_{s}\right)} \right] ,
\end{eqnarray*}%
and
\begin{align*}
\Psi \left( {\widetilde X}_{t}^{i}\right) =\Psi \left( x_{0}\right)
&+\int_{0}^{t}\left\langle \nabla \Psi
\left( {\widetilde X}_{s}^{i}\right) ,f^{i}\left( s,X_{s}^{i}\right) \right\rangle ds+\int_{0}^{t} \ltrans{\left(\nabla \Psi \left( {\widetilde X}_{s}^{i}\right)\right)} g^{i}\left(
s,X_{s}^{i}\right) dB_{s}  \\
&+\int_{0}^{t}\left\langle \nabla \Psi \left( {\widetilde X}_{s}^{i}\right) ,{\bf n}
_{s}^{i}\right\rangle d\left\vert K^{i}\right\vert _{s} +\frac{1}{2}\int_{0}^{t}\mbox{tr}\left[{\bf H}(\Psi({\widetilde X}_s^i))
g^{i}\left( s,X_{s}^{i}\right) d\left\langle
B,B\right\rangle _{s}\ltrans{\left( g^{i}\left( s,X_{s}^{i}\right) \right)}%
\right].
\end{align*}%
Then,
\begin{align*}
&\exp\left\{-\frac{1}{{\delta'} }\left( \Psi \left( {\widetilde X}_{t}^{1}\right) +\Psi
\left( {\widetilde X}_{t}^{2}\right) \right) \right\} \times \left\vert
{\widetilde X}_{t}^{1}-{\widetilde X}_{t}^{2}\right\vert ^{2} \\
&\ \ \ \ =2\int_{0}^{t}\exp \left\{ -\frac{1}{{\delta'} }\left( \Psi \left(
{\widetilde X}_{s}^{1}\right) +\Psi \left( {\widetilde X}_{s}^{2}\right) \right) \right\} \\
&\ \ \ \ \ \ \ \ \ \ \ \ \times \bigg\{
\left\langle {\widetilde X}_{s}^{1}-{\widetilde X}_{s}^{2},  f^{1}\left(
s,X_{s}^{1}\right) -f^{1}\left( s,X_{s}^{2}\right) +\hat{f}_{s}\right\rangle ds\\
&\ \ \ \ \ \ \ \ \ \ \ \ \ \ \ \ +\ltrans{\left( {\widetilde X}_{s}^{1}-{\widetilde X}_{s}^{2}\right)} \left( g^{1}\left(
s,X_{s}^{1}\right) -g^{1}\left( s,X_{s}^{2}\right) +\hat{g}_{s}\right) dB_{s}\\
&\ \ \ \ \ \ \ \ \ \ \ \ \ \ \ \ +\left\langle {\widetilde X}_{s}^{1}-{\widetilde X}_{s}^{2}, {\bf n}
_{s}^{1}\right\rangle d\left\vert K^{1}\right\vert _{s}
-\left\langle {\widetilde X}_{s}^{1}-{\widetilde X}_{s}^{2},{\bf n}
_{s}^{2}\right\rangle d\left\vert K^{2}\right\vert _{s}\bigg\}
\\
&\ \ \ \ \ \ \ \ +\int_{0}^{t}\exp \left\{ -\frac{1}{{\delta'} }\left( \Psi \left(
{\widetilde X}_{s}^{1}\right) +\Psi \left( {\widetilde X}_{s}^{2}\right) \right) \right\} \\
&\ \ \ \ \ \ \ \ \ \ \ \ \times\mbox{tr}\left[\left( g^{1}\left( s,X_{s}^{1}\right) -g^{1}\left(
s,X_{s}^{2}\right) +\hat{g}_{s}\right) d\left\langle B,B\right\rangle
_{s}\ltrans{\left( g^{1}\left( s,X_{s}^{1}\right) -g^{1}\left( s,X_{s}^{2}\right) +%
\hat{g}_{s}\right)}\right] \\
&\ \ \ \ \ \ \ \  -\frac{1}{{\delta'} }\int_{0}^{t}\exp \left\{ -\frac{1}{{\delta'} }\left( \Psi
\left( {\widetilde X}_{s}^{1}\right) +\Psi \left( {\widetilde X}_{s}^{2}\right) \right) \right\}
\times \left\vert {\widetilde X}_{s}^{1}-{\widetilde X}_{s}^{2}\right\vert ^{2} \\
&\ \ \ \ \ \ \ \  \ \ \ \ \times\bigg\{
\left\langle \nabla \Psi \left({\widetilde X}_{s}^{1}\right), f^{1}\left(s,X_{s}^{1}\right)\right\rangle
+\left\langle \nabla \Psi \left( {\widetilde X}_{s}^{2}\right), f^{2}\left(s,X_{s}^{2} \right)\right\rangle ds\\
&\ \ \ \ \ \ \ \ \ \ \ \ \ \ \ \ +\left( \ltrans{\left(\nabla\Psi \left( {\widetilde X}_{s}^{1}\right)\right)} g^{1}\left(
s,X_{s}^{1}\right)+\ltrans{\left(\nabla\Psi \left({\widetilde X}_{s}^{2}\right)\right)} g^{2}\left(
s,X_{s}^{2}\right) \right) dB_{s} \\
&\ \ \ \ \ \ \ \ \ \ \ \ \ \ \ \ +\left\langle\nabla \Psi \left( \widetilde{X}_{s}^{1}\right), {\bf n} _{s}^{1}\right\rangle d\left\vert
K^{1}\right\vert _{s}
+\left\langle\nabla \Psi \left( \widetilde{X}_{s}^{2}\right), {\bf n} _{s}^{2}\right\rangle d\left\vert
K^{2}\right\vert _{s}\bigg\} \\
&\ \ \ \ \ \ \ \ -\frac{1}{2{\delta'} }\int_{0}^{t}\exp \left\{ -\frac{1}{{\delta'} }\left( \Psi
\left( {\widetilde X}_{s}^{1}\right) +\Psi \left( {\widetilde X}_{s}^{2}\right) \right) \right\}
\times \left\vert {\widetilde X}_{s}^{1}-{\widetilde X}_{s}^{2}\right\vert ^{2} \\
&\ \ \ \ \ \ \ \ \ \ \ \ \times\mbox{tr}\bigg[{\bf H}\left(\Psi\left( {\widetilde X}_{s}^{1}\right)\right)
g^{1}\left( s,X_{s}^{1}\right) d\left\langle B,B\right\rangle _{s}\ltrans{\left(
g^{1}\left( s,X_{s}^{1}\right) \right)}\\
&\ \ \ \ \ \ \ \ \ \ \ \ \ \ \ \ +{\bf H}\left(\Psi\left( {\widetilde X}_{s}^{2}\right)\right) g^{2}\left(
s,X_{s}^{2}\right) d\left\langle B,B\right\rangle _{s}\ltrans{\left( g^{2}\left(
s,X_{s}^{2}\right) \right)}\bigg] \\
&\ \ \ \ \ \ \ \ +\frac{1}{2}\left( \frac{1}{{\delta'} }\right) ^{2}\int_{0}^{t}\exp \left\{ -%
\frac{1}{{\delta'} }\left( \Psi \left( {\widetilde X}_{s}^{1}\right) +\Psi \left(
{\widetilde X}_{s}^{2}\right) \right) \right\} \left\vert {\widetilde X}_{s}^{1}-{\widetilde X}_{s}^{2}\right\vert
^{2} \\
&\ \ \ \ \ \ \ \ \ \ \ \ \times \left(\ltrans{\left(\nabla \Psi \left( {\widetilde X}_{s}^{1}\right)\right)} g^{1}\left(
s,X_{s}^{1}\right) +\ltrans{\left(\nabla \Psi \left( {\widetilde X}_{s}^{2}\right)\right)} g^{2}\left(
s,X_{s}^{2}\right)\right)\\
&\ \ \ \ \ \ \ \ \ \ \ \ \ \ \ \ \times d\left\langle B,B\right\rangle _{s}\left(\nabla \Psi \left(
{\widetilde X}_{s}^{1}\right) \ltrans{\left(g^{1}\left( s,X_{s}^{1}\right)\right)} +\nabla \Psi \left(
{\widetilde X}_{s}^{2}\right) \ltrans{\left(g^{2}\left( s,X_{s}^{2}\right) \right)}\right)\\
&\ \ \ \ \ \ \ \ -\frac{2}{{\delta'} }\int_{0}^{t}\exp \left\{ -\frac{1}{{\delta'} }\left( \Psi
\left( {\widetilde X}_{s}^{1}\right) +\Psi \left( {\widetilde X}_{s}^{2}\right) \right) \right\} \\
&\ \ \ \ \ \ \ \ \ \ \ \ \times\ltrans{\left( {\widetilde X}_{s}^{1}-{\widetilde X}_{s}^{2}\right)}\left( g^{1}\left(
s,X_{s}^{1}\right) -g^{1}\left( s,X_{s}^{2}\right) +\hat{g}_{s}\right)  \\
&\ \ \ \ \ \ \ \ \ \ \ \ \ \ \ \ \times d\left\langle B,B\right\rangle _{s}\left(\nabla \Psi \left(
{\widetilde X}_{s}^{1}\right) \ltrans{\left(g^{1}\left( s,X_{s}^{1}\right)\right)} +\nabla \Psi \left(
{\widetilde X}_{s}^{2}\right) \ltrans{\left(g^{2}\left( s,X_{s}^{2}\right) \right)}\right).
\end{align*}%
Thanks to Condition (C), we know that the integrals with respect to $d|K|$ are negative. Since the set $\Gamma$, the function $\Psi$ and its derivatives, the functions $f^i$ and $g^i$, $i=1, 2$, are bounded, we have
\begin{align*}
&\exp\left\{-\frac{2M}{{\delta'} } \right\} \times \left\vert
{\widetilde X}_{s}^{1}-{\widetilde X}_{s}^{2}\right\vert ^{2} \\
&\ \ \ \ \leq 2\int_{0}^{t}\exp \left\{ -\frac{1}{{\delta'} }\left( \Psi \left(
{\widetilde X}_{s}^{1}\right) +\Psi \left( {\widetilde X}_{s}^{2}\right) \right) \right\}\ltrans{\left( {\widetilde X}_{s}^{1}-{\widetilde X}_{s}^{2}\right)} \left( g^{1}\left(
s,X_{s}^{1}\right) -g^{1}\left( s,X_{s}^{2}\right) +\hat{g}_{s}\right) dB_{s}\\
&\ \ \ \ \ \ \ \ -\frac{1}{{\delta'} }\int_{0}^{t}\exp \left\{ -\frac{1}{{\delta'} }\left( \Psi
\left( {\widetilde X}_{s}^{1}\right) +\Psi \left( {\widetilde X}_{s}^{2}\right) \right) \right\}
\times \left\vert {\widetilde X}_{s}^{1}-{\widetilde X}_{s}^{2}\right\vert ^{2}\\
&\ \ \ \ \ \ \ \ \ \ \ \ \times\left( \ltrans{\left(\nabla\Psi \left( {\widetilde X}_{s}^{1}\right)\right)} g^{1}\left(
s,X_{s}^{1}\right)+\ltrans{\left(\nabla\Psi \left({\widetilde X}_{s}^{2}\right)\right)} g^{2}\left(
s,X_{s}^{2}\right) \right) dB_{s}\\
&\ \ \ \ \ \ \ \ +C\int^t_0  \left(\left\vert
{\widetilde X}_{s}^{1}-{\widetilde X}_{s}^{2}\right\vert ^{2}+ \left\vert
{X}_{s}^{1}-{X}_{s}^{2}\right\vert ^{2}+\left\vert
\hat{f}_{s}\right\vert^{2}+\left\vert
\hat{g}_{s}\right\vert^{2}\right)ds,
\end{align*}%
where $C>0$ is a constant that depends on $\Gamma$, $d$, $\delta'$, $L_\Psi$ and $L_0$, which may vary from line to line in the sequel.
We square both sides and apply the BDG type inequality to obtain
\begin{align*}
\mathbb{E}&\left[ \sup_{0\leq s\leq t}\left\vert
\widetilde X_{s}^{1}-\widetilde X_{s}^{2}\right\vert ^{4}\right]\\
& \leq C\int^t_0\left(\mathbb{E}\left[ \sup_{0\leq u\leq s}\left\vert
\widetilde X_{u}^{1}-\widetilde X_{u}^{2}\right\vert^{4}\right]+\mathbb{E}\left[ \sup_{0\leq u\leq s}\left\vert
 X_{u}^{1}- X_{u}^{2}\right\vert ^{4}\right]+
\mathbb{E}\left[ \sup_{0\leq u\leq s}\left\vert \hat{f}_{u}\right\vert ^{4}%
\right] +\mathbb{E}\left[ \sup_{0\leq u\leq s}\left\vert \hat{g}%
_{u}\right\vert ^{4}\right] \right) ds.
\end{align*}%
The desired result follows from the Gronwall inequality.
\end{proof}
\begin{proof}[Proof of Theorem \ref{trgsde}]
The uniqueness of solution is straightforward by Proposition \ref{propest}. We now turn to prove the existence. Indeed, by Corollary \ref{ritop} one can construct a sequence $\{(X^m, K^m)\}_{m\in \mathbb{N}^*}$ by the Picard type iteration starting with $X^0\equiv K^0=0$,
\begin{align*}
Y^{m+1}_t=\int_{0}^{t}f\left( s,X^m_{s}\right) ds+\int_{0}^{t}g\left( s,X^m_{s}\right)
dB_{s},\ \ \ \
X^{m+1}_{t} =x_{0}+Y^{m+1}_t+K^{m+1}_{t}, \\
\left\vert K^{m+1}\right\vert _{t} =\int_{0}^{t}\mathbf{1}%
_{\left\{ X^{m+1}_{s} \in \partial D\right\} }d\left\vert K^{m+1}\right\vert _{s}~%
{\rm and}\ K^{m+1}_{t}=\int_{0}^{t}{\bf n}^{m+1}_{s}d\left\vert K^{i}\right\vert
_{s}~{\rm with}\ {\bf n}^{m+1} _{s}\in {\mathcal{N}}_{{X^{m+1}_{s}}}.
\end{align*}
Thanks to the a priori estimate (\ref{est10}), we can proceed a similar argument as the proof of Theorem 2.1 and 4.1 in \cite{Gao} to find a couple of processes $(X, K)$ such that
\begin{align*}
\mathbb{E}\left[\sup_{0\leq t\leq T}|X^m_t-X_t|^2\right]+\mathbb{E}\left[\sup_{0\leq t\leq T}|K^m_t-K_t|^2\right]&\longrightarrow 0,\\
\mathbb{E}\left[\sup_{0\leq t\leq T}\left|Y^m_t-\int_{0}^{t}f\left( s,X_{s}\right) ds-\int_{0}^{t}g\left( s,X_{s}\right)
dB_{s}\right|^2\right]&\longrightarrow 0,
 \ \ \ \ \mbox{as}\ m\longrightarrow \infty,
\end{align*}
and such that there exists a polar set $A$ and subsequence $\{(X^{m_k}, K^{m_k})\}_{k\in \mathbb{N}^*}$, such that for each $\omega\in A^c$, $(X^{m_k}(\omega), K^{m_k}(\omega))$ is the solution couple for the deterministic Skorohod problem with $(Y^{m_k}(\omega), D)$, and
\begin{align*}
\sup_{0\leq t\leq T}|X^{m_k}_t(\omega)-X_t(\omega)|+\sup_{0\leq t\leq T}|K^{m_k}_t(\omega)-K_t(\omega)|&\longrightarrow 0, \\
\sup_{0\leq t\leq T}\left|Y^{m_k}(\omega)-\left(\int_{0}^{t}f\left( s,X_{s}\right) ds-\int_{0}^{t}g\left( s,X_{s}\right)
dB_{s}\right)(\omega)\right |&\longrightarrow 0,
\ \ \ \ \mbox{as}\ k\longrightarrow \infty.
\end{align*}
It is clear that $\left( X ,K \right)\in \left(M^2_G([0, T]; \mathbb{R}^d)\times M^2_G([0, T]; \mathbb{R}^d)\right)$. Besides, for each $\omega\in A^c$, $(X(\omega), K(\omega))$ verified (i) (ii) (iii) of Definiton  \ref{defrgb}, which can be proved by the last step of the proof to Theorem 4.1 in \cite{Saisho1}. We complete the proof.
\end{proof}

\bigskip
{\bf Acknowledgment:} Yiqing Lin gratefully acknowledges financial support from the European Research Council (ERC) under grant 321111. Abdoulaye SOUMANA HIMA is grateful for partial financial support from the Lebesgue Center of Mathematics (``Investissements d'avenir'' Program) under grant ANR-11-LABX-0020-01. We thank Ying HU for helpful suggestions.
\bigskip
\bibliography{mrgsde}
\bibliographystyle{plain}
\end{document}